\providecommand{\U}[1]{\protect\rule{.1in}{.1in}}
\newtheorem{theorem}{Theorem}
\newtheorem{corollary}[theorem]{Corollary}
\newtheorem{definition}[theorem]{Definition}
\newtheorem{proposition}[theorem]{Proposition}
\newenvironment{proof}[1][Proof]{\textbf{#1.} }{\ \rule{0.5em}{0.5em}}
\let\pdfoutput=\undefined\fi
\begin{document}

\title{Climbing on Pyramids}
\author{\textbf{Jean Serra and Bangalore\ Ravi Kiran}\\Université Paris-Est \\Laboratoire d'Informatique Gaspard-Monge\\A3SI, ESIEE Paris, 2 Bd Blaise Pascal, B.P. 99 \\93162 Noisy-le-Grand CEDEX, France }
\date{15, march 2012}
\maketitle

\begin{abstract}
A new approach is proposed for finding the "best cut" in a hierarchy of
partitions by energy minimization. Said energy must be "climbing" i.e. it must
be hierarchically and scale increasing. It encompasses separable energies and
those composed under supremum.

\end{abstract}

\section{Introduction}

The present note\footnote{This work received funding from the Agence Nationale
de la Recherche through contract ANR-2010-BLAN-0205-03 KIDIKO.} extends the
results of \cite{SER11a}, which themselves generalize some results of
L.\ Guigues' Phd thesis \cite{GUI03} (see also \cite{GUI06}). In \cite{GUI03},
any partition, or partial partition, of the space is associated with a
"separable energy", i.e. with an energy whose value for the whole partition is
the sum of the values over its various classes. Under this assumption, two
problems are treated:

\begin{enumerate}
\item given a hierarchy of partitions and a separable energy $\omega$, how to
combine some classes of the hierarchy in order to obtain a new partition that
minimizes $\omega$?

\item when $\omega$ depends on integer $j$, i.e. $\omega=\omega^{j}$, how to
generate a sequence of minimum partitions that is increasing in $j$, which
therefore should form a minimum hierarchy?
\end{enumerate}

Though\ L.\ Guigues exploited linearity and affinity assumptions to lean
original models on, it is not sure that they are the very cause of the
properties he found. Indeed, for solving problem 1 above, an alternative and
simpler condition of increasingness is proposed in \cite{SER11a}. After
additional developments, it leads to the theorem 4 of this paper. The second
question, of a minimum hierarchy, which was not treated in \cite{SER11a}, is
the concern of sections 3 to 5 of the text. The main results are the theorem
\ref{pyram_croit}, and the new algorithm developed in Section 5, more general
but simpler than that of \cite{GUI03}. It is followed by the two sections 6
and 7 about the additive and the $\vee$-composed energies respectively. They
show that Theorem \ref{pyram_croit} applies to linear energies (e.g. Mumford
and Shah, Salembier and Garrido), as well to several useful non linear
energies (Soille, Zanoguera).

\section{Hierarchy of partitions (reminder)}%

\begin{figure}
[ptb]
\begin{center}
\ifcase\msipdfoutput
\includegraphics[
height=2.2736in,
width=5.047in
]%
{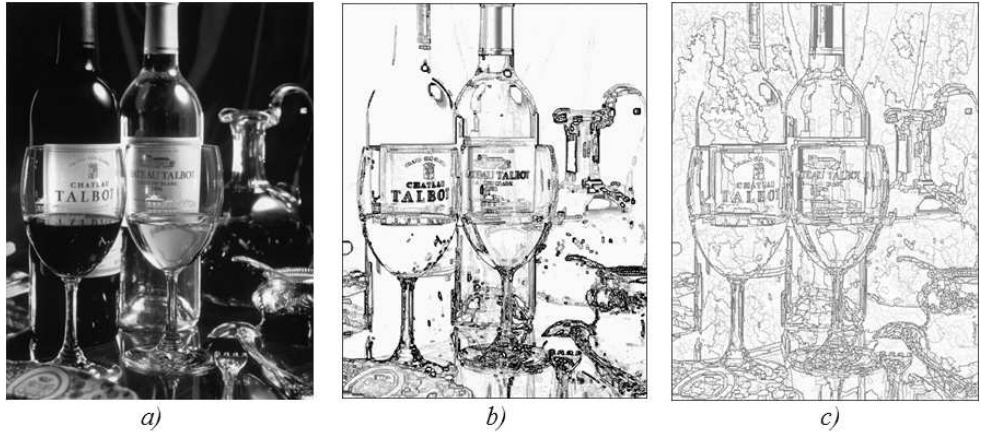}%
\else
\includegraphics[
height=2.2736in,
width=5.047in
]%
{C:/SWtexts/2012/climbing_on_pyramids_2012/graphics/Talbot_N-B__1.pdf}%
\fi
\caption{\textit{a}) Basic input, \textit{b}) and \textit{c}) saliency maps from increasing watershed w.r.t. floodings by dynamics [\textit{b})] and by volume [\textit{c}%
)\cite{MEY09a}, \cite{COU08}.}
\label{talbot}%
\end{center}
\end{figure}

The space under study (Euclidean, digital, or else) is denoted by $E$, and the
set of all partitions of $E$ by $\mathcal{D}_{0}(E)$. Here a convenient notion
is that \textit{partial partition, }of C.\ Ronse\textit{\ }\cite{RON08}%
\textit{. }When we associate a partition $\pi(A)$ of a set $A\in
\mathcal{P}(E)$ and nothing outside $A$, then $\pi(A)$ is called a
\textit{partial partition} of $E$ of support $A$. The family of all partial
partitions of set $E$ is denoted by $\mathcal{D}(E)$, or simply by
$\mathcal{D}$ when there is no ambiguity.

\bigskip

Finite hierarchies of partitions appeared initially in taxonomy, for
\ classifying objects. One can quote in particular the works of J.P.\ Benzécri
\cite{BZCRI84} and of E. Diday \cite{DID82}. We owe to the first author the
theorem linking ultrametrics with hierarchies, namely the equivalence between
statements 1 and 3 in theorem \ref{3equivalences} below. Hierarchies $H$ of
partitions usually derive from a chain of segmentations of some given function
$f$ on set $E$, i.e. from a stack of scalar or vector images, a chain which
then serves as the framework for further operators. We consider, here,
function $f$ and hierarchy $H$ as two starting points, possibly independent.
This results in the following definition:

\begin{definition}
Let $\mathcal{D}_{0}(E)$ be the set of all partitions of $E$, equipped with
the refinement ordering. A hierarchy $H$, of partitions $\pi_{i}$ of $E$ is a
finite chain in\ $\mathcal{D}_{0}(E)$, i.e.
\begin{equation}
H=\{\pi_{i},0\leq i\leq n,\pi_{i}\in\mathcal{D}_{0}(E)\mid i\leq k\leq
n\Rightarrow\pi_{i}\leq\pi_{k}\}, \label{hierarchy 1}%
\end{equation}
of extremities the universal extrema of $\mathcal{D}_{0}(E)$, namely $\pi
_{0}=\{\{x\},x\in E\}$ and $\pi_{n}=E$.
\end{definition}

Let $S_{i}(x)$ be the class of partition $\pi_{i}$ of $H$ at point $x\in E$.
Denote by $\mathcal{S}$ the set of all classes $S_{i}(x)$ , i.e.
$\mathcal{S}=\{S_{i}(x),x\in E,0\leq i\leq n\}$. Expression (\ref{hierarchy 1}%
) means that at each point $x\in E$ the family of those classes $S_{i}(x)$ of
$\mathcal{S}$ that contain $x$ forms a finite chain $\mathcal{S}_{x}$ in
$\mathcal{P}(E)$, of nested elements from $\{x\}$ to $E:$%
\[
\mathcal{S}_{x}=\{S_{i}(x),0\leq i\leq n\}.
\]
%

\begin{figure}
[ptb]
\begin{center}
\ifcase\msipdfoutput
\includegraphics[
height=1.2583in,
width=5.1465in
]%
{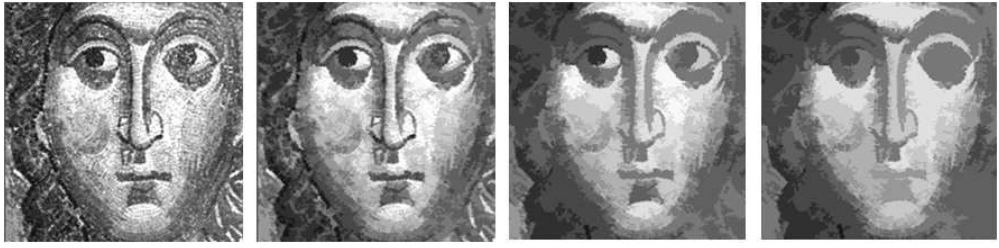}%
\else
\includegraphics[
height=1.2583in,
width=5.1465in
]%
{C:/SWtexts/2012/climbing_on_pyramids_2012/graphics/mosaic__2.pdf}%
\fi
\caption{The initial image has been transformed by increasing alternated
connected filters. They result in partitions into flat zones that increase
from left to right.}%
\label{mosaic}%
\end{center}
\end{figure}

According to a classical result, a family $\{S_{i}(x),x\in E,0\leq i\leq n\}$
of indexed sets generates the classes of a hierarchy iff
\begin{equation}
i\leq j\text{ \ and \ }x,y\in E\text{ \ \ }\Rightarrow\text{ \ \ }%
S_{i}(x)\subseteq S_{j}(y)\text{ \ or \ }S_{i}(x)\supseteq S_{j}(y)\text{ \ or
}S_{i}(x)\cap S_{j}(y)=\varnothing. \label{hierarchy 2}%
\end{equation}

A hierarchy may be represented in space $E$ by the saliencies of its
frontiers, as depicted in Figure \ref{talbot}. Another representation, more
adapted to the present study, emphasizes the classes rather than their edges,
as depicted in Figure\ref{mosaic}. Finally, one can also describe it, in a
more abstract manner, by a family tree where each node of bifurcation is a
class $S$, as depicted in Figure \ref{hierarchy}. The classes of $\pi_{i-1}$
at level $i-1$ which are included in $S_{i}(x)$ are said to be \textit{the
sons} of $S_{i}(x)$. Clearly, the sets of the descenders of each $S$ forms in
turn a hierarchy $H(S)$ of summit $S$, which is included in the complete
hierarchy $H=H(E)$.%

\begin{figure}
[ptb]
\begin{center}
\ifcase\msipdfoutput
\includegraphics[
height=1.6457in,
width=5.1465in
]%
{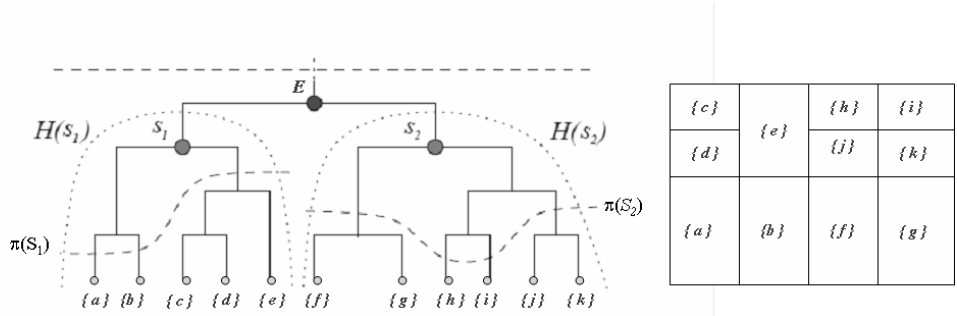}%
\else
\includegraphics[
height=1.6457in,
width=5.1465in
]%
{C:/SWtexts/2012/climbing_on_pyramids_2012/graphics/hierarchy__3.pdf}%
\fi
\caption{Left, hierarchical tree; right, the corresponding space structure.
$S_{1}$ and $S_{2}$ are the nodes sons of $E$, and $H(S_{1})$ and $H(S_{1})$
are the associated sub-hierarchies.$\ \pi_{1}$ and $\pi_{2}$ are cuts of
$H(S_{1})$ and $H(S_{1})$ respectively, and $\pi_{1}\sqcup\pi_{2}$ is a cut of
$E$.}%
\label{hierarchy}%
\end{center}
\end{figure}

The two zones $H(S_{1})$ and $H(S_{2})$, drawn in Figure \ref{hierarchy} in
small dotted lines, are examples of such sub hierarchies. The following
theorem \cite{SER11a} makes more precise the hierarchical structure

\begin{theorem}
\label{3equivalences}The three following statements are equivalent:

\begin{enumerate}
\item $H$ is an indexed hierarchy,

\item the set $\mathcal{S}$ of all classes of the partition $\pi_{i}$ of $H$
forms an ultrametric space, of distance the indexing parameter,

\item every binary criterion\textit{\ }$\sigma:(\mathcal{F},\mathcal{S\cup
\varnothing})\rightarrow\{0,1\}$\textit{\ is connective.}
\end{enumerate}
\end{theorem}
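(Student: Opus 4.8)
The plan is to prove the three statements equivalent by a single cycle of implications $1\Rightarrow 2\Rightarrow 3\Rightarrow 1$, noting that the structural fact driving every step is the trichotomy (\ref{hierarchy 2}): for $i\le j$ any two classes $S_{i}(x),S_{j}(y)$ are nested or disjoint, equivalently that for each $x\in E$ the classes of $\mathcal{S}$ containing $x$ form a chain $\mathcal{S}_{x}$.

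For $1\Rightarrow 2$ I would manufacture the ultrametric from the indices directly: reading each point through its singleton class, set $d(S,T)=\min\{k:\ \exists R\in\pi_{k},\ S\cup T\subseteq R\}$, the level of the smallest common ancestor, with $d(S,S)=0$. Symmetry and positive-definiteness are immediate, and the ultrametric inequality $d(S,U)\le\max\bigl(d(S,T),d(T,U)\bigr)$ is just a rereading of (\ref{hierarchy 2}): if $S,T$ lie in a common class at level $i$ and $T,U$ in one at level $j\ge i$, then nesting forces the level-$j$ class to swallow all three, so $S$ and $U$ merge no later than $\max(i,j)$. This is exactly an ultrametric whose values are the indexing parameter.

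For $2\Rightarrow 3$ I would first recall that the closed balls $B(x,i)=\{y:\ d(x,y)\le i\}$ of an ultrametric are pairwise nested or disjoint and coincide with the classes $S_{i}(x)$, so $\mathcal{S}_{x}$ is again a chain. Then, recalling that a binary criterion $\sigma$ is connective precisely when, for each $f\in\mathcal{F}$, the accepted family $\{S\in\mathcal{S}:\sigma(f,S)=1\}$ enlarged by $\varnothing$ and the singletons is a connection, i.e. closed under unions of subfamilies sharing a common point, the claim becomes automatic: any accepted subfamily with a common point $x$ lives in the chain $\mathcal{S}_{x}$, hence its union is its largest member, which is again a class of $\mathcal{S}$. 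Connectivity therefore holds for every assignment $\sigma$, with no further hypothesis.

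For $3\Rightarrow 1$ I would argue by contraposition. If (\ref{hierarchy 2}) fails there exist classes $S,T\in\mathcal{S}$ that properly overlap, $S\cap T\neq\varnothing$ while neither contains the other. Choosing the criterion that accepts exactly $S$ and $T$ and rejects every strictly larger class produces two accepted sets with a common point whose union belongs to no accepted class; the accepted family is then not a connection, so $\sigma$ is not connective, contradicting $3$. Hence $3$ forces the trichotomy, i.e. an indexed hierarchy, closing the cycle. The genuinely delicate step is $2\Rightarrow 3$ together with $3\Rightarrow 1$: unlike the classical Benz\'ecri equivalence $1\Leftrightarrow 2$, these carry the technical weight of Serra's connective-segmentation framework, so the real work is to fix the admissible form of a criterion $(\mathcal{F},\mathcal{S}\cup\varnothing)\to\{0,1\}$ precisely enough that the counterexample built above is itself admissible.
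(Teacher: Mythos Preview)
The paper does not actually prove this theorem: it is stated as a result imported from \cite{SER11a}, and the text moves on immediately to its consequence (``This result shows that the connective segmentation approach is inefficient for hierarchies\ldots''). So there is no in-paper proof to compare your proposal against.

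On its own merits your cycle $1\Rightarrow 2\Rightarrow 3\Rightarrow 1$ is the natural shape, and your $1\Rightarrow 2$ is exactly the classical Benz\'ecri construction the paper alludes to. Your $2\Rightarrow 3$ step has a small slippage worth flagging: statement 2 posits an ultrametric on the set $\mathcal{S}$ of classes, not on $E$, so its closed balls are subsets of $\mathcal{S}$, and the sentence ``closed balls \ldots\ coincide with the classes $S_i(x)$'' conflates two different spaces. What your argument actually uses is the chain property of $\mathcal{S}_x$ for each $x\in E$; that comes from the trichotomy (\ref{hierarchy 2}), i.e.\ from statement 1, so in effect you are running $2\Rightarrow 1\Rightarrow 3$ rather than a direct $2\Rightarrow 3$. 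This is harmless for the overall equivalence but should be said plainly. Your own closing paragraph already puts its finger on the genuine residual difficulty: neither your $2\Rightarrow 3$ nor your $3\Rightarrow 1$ counterexample can be made fully rigorous without the precise definition of a \emph{connective} criterion on $(\mathcal{F},\mathcal{S}\cup\varnothing)$, which this paper does not reproduce from \cite{SER11a}; the admissibility of the ``accept exactly $S$ and $T$'' criterion, and the exact closure property required of the accepted family, both live in that external reference.
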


This result shows that the connective segmentation approach is inefficient for
hierarchies, and orients us towards the alternative method, which consists in
optimizing an energy.

\section{Optimum partitioning of a hierarchy}

\subsection{Cuts in a hierarchy}

Following L. Guigues \cite{GUI03} \cite{GUI06}, we say that any partition
$\pi$ of $E$ whose classes are taken in $\mathcal{S}$ defines a \textit{cut}
in hierarchy $H$. The set of all cuts of $E$ is denoted by $\Pi(E)=\Pi$. Every
"horizontal" section $\pi_{i}(H)$ at level $i$ is obviously a cut, but several
levels can cooperate in a same cut, such as $\pi(S_{1})$ and $\pi(S_{2})$,
drawn with thick dotted lines in Figure \ref{hierarchy}. Similarly, the
partition $\pi(S_{1})\sqcup\pi(S_{2})$ generates a cut of $H(E)$. The symbol
$\sqcup$ is used here for expressing that groups of classes are concatenated.
It means that given two partial partitions $\pi(S_{1})$ and $\pi(S_{2})$
having disjoint supports, $\pi(S_{1})\sqcup\pi(S_{2})$ is the partial
partition whose classes are either those of $\pi(S_{1})$ or those of
$\pi(S_{2})$.

Similarly, one can define cuts inside any sub-hierarchy $H(S)$ of summit $S$.
Let $\Pi(S)$ be the family of all cuts of $H(S)$. The union of all these
families, when node $S$ spans hierarchy $H$ is denoted by%
\begin{equation}
\widetilde{\Pi}(H)=\cup\{\Pi(S),S\in\mathcal{S}(H)\}. \label{guigues 3}%
\end{equation}
Although the set $\widetilde{\Pi}(H)$ does not regroup all possible partial
partitions with classes in $\mathcal{S}$, it contains the family $\Pi(E)$ of
all cuts of $H(E)$. The hierarchical structure of the data induces a relation
between the family $\Pi(S)$ of the cuts of node $S$ and the families
$\Pi(T_{1}),..,\Pi(T_{q})$ of the sons $T_{1},..,T_{q}$ of $S$. Since all
expressions of the form $\sqcup\{\pi(T_{k});1\leq k\leq q\}$ define cuts of
$S$, $\Pi(S)$ contains the whole family
\[
\Pi^{\prime}(S)=\{\pi(T_{1})\sqcup..\pi(T_{k})..\sqcup\pi(T_{q}%
);\ \text{\ \ \ \ }\pi(T_{1})\in\Pi(T_{1})...\ \pi(T_{q})\in\Pi(T_{q})\},
\]
plus the cut of $S$ into a unique class, i.e. $S$ itself, which is not a
member of $\Pi^{\prime}(S)$. And as the other unions of several $T_{k}$ are
not classes listed in $\mathcal{S}$, there is no other possible cut, hence
\begin{equation}
\Pi(S)=\Pi^{\prime}(S)\cup S. \label{hier 5}%
\end{equation}

\subsection{Cuts of minimum energy and $h$-increasingness}

In the present context, an energy $\omega:$ $\mathcal{D}(E)\rightarrow
\mathbb{R}^{+\text{ }}$is a non negative numerical function over the family
$\mathcal{D}(E)$ of all partial partitions of set $E$. The cuts of
$\Pi(E)\subseteq\widetilde{\Pi}$ of minimum energy, or \textit{minimum cuts},
are characterized under the assumption of \textit{hierarchical
increasingness,} or more shortly\textit{\ of }$\mathit{h}$%
\textit{-increasingness }\cite{SER11a}.

\begin{definition}
Let $\pi_{1}$ and $\pi_{2}$ be two partial partitions of same support, and
$\pi_{0}$ be a partial partition disjoint from $\pi_{1}$ and $\pi_{2}$. An
energy $\omega$ on $\mathcal{D}(E)$ is said to be \emph{hierarchically
increasing, or }$h$\emph{-increasing},\emph{\ }in $\mathcal{D}(E)$
when,\ $\pi_{0},\pi_{1},\pi_{2}\in\mathcal{D}(E),\ \pi_{0}$ disjoint of
$\pi_{1}$ and $\pi_{2}$, we have
\begin{equation}
\omega(\pi_{1})\leq\omega(\pi_{2})\ \ \Rightarrow\ \ \omega(\pi_{1}\sqcup
\pi_{0})\leq\ \omega(\pi_{2}\sqcup\pi_{0}).\text{ \ } \label{croiss_hierarch}%
\end{equation}

\end{definition}

An illustration of the meaning of implication (\ref{croiss_hierarch}) is given
in Figure \ref{hierach_increas}. When the partial partitions are embedded in a
hierarchy $H$, then Rel.(\ref{croiss_hierarch}) allows us an easy
characterization of the cuts of minimum energy of $H$, according to the
following property, valid for the class $\mathcal{H}$ of all finite
hierarchies on $E$%

\begin{figure}
[ptb]
\begin{center}
\ifcase\msipdfoutput
\includegraphics[
height=1.4001in,
width=5.1084in
]%
{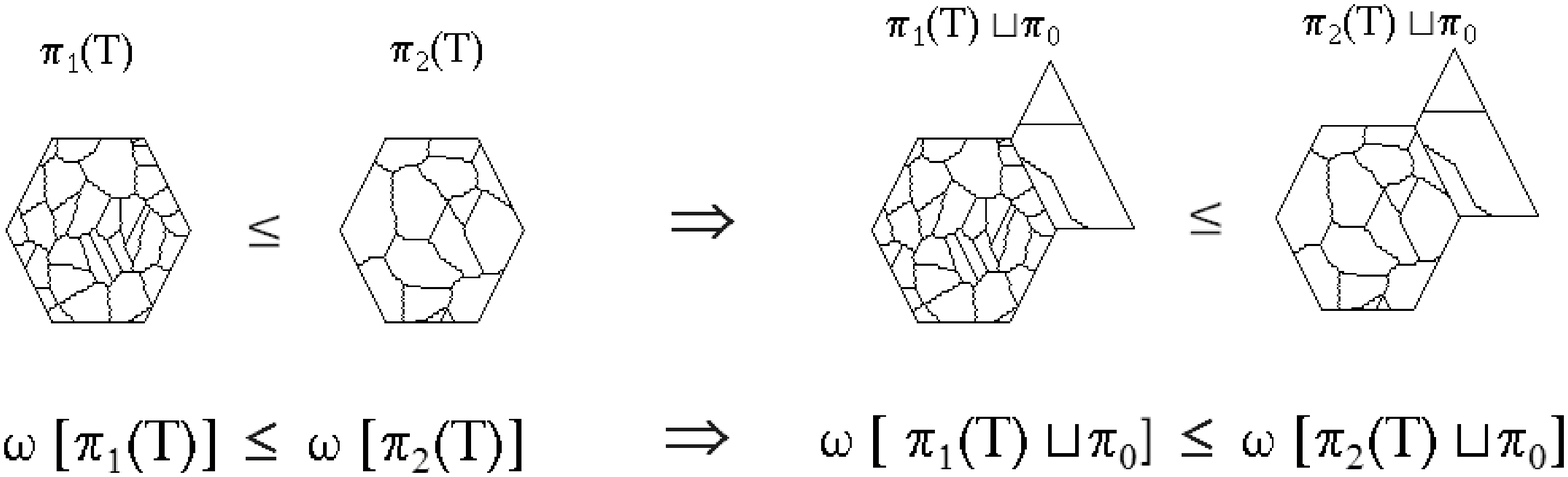}%
\else
\includegraphics[
height=1.4001in,
width=5.1084in
]%
{C:/SWtexts/2012/climbing_on_pyramids_2012/graphics/hierach_increas__4.pdf}%
\fi
\caption{Hierachical increasingness.}%
\label{hierach_increas}%
\end{center}
\end{figure}

\begin{theorem}
\label{coupe_min} Let $H\in\mathcal{H}$ be a finite hierarchy, and $\omega$ be
an energy on $\mathcal{D}(E)$. Consider a node $S$ of $H$ with $p$
sons\ $T_{1}..T_{p}$ of minimum cuts $\pi_{1}^{\ast},..\pi_{p}^{\ast}$. The
cut of minimum energy of node $S$ is either the cut\
\begin{equation}
\pi_{1}^{\ast}\sqcup\pi_{2}^{\ast}..\sqcup\pi_{p}^{\ast}, \label{hier 4}%
\end{equation}
\ or the partition of $\omega$ into a unique class, if and only if $S$ is $h$-increasing.
\end{theorem}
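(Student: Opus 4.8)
The plan is to read the minimum cut of $S$ through the decomposition (\ref{hier 5}), $\Pi(S)=\Pi^{\prime}(S)\cup S$, so that finding the cheapest element of $\Pi(S)$ amounts to comparing the single-class cut $S$ with the best member of the product family $\Pi^{\prime}(S)=\{\pi(T_{1})\sqcup\cdots\sqcup\pi(T_{p})\}$. The entire content of the theorem is then that, under $h$-increasingness, the minimum of $\omega$ over $\Pi^{\prime}(S)$ is attained by gluing the individual son-optima $\pi_{1}^{\ast}\sqcup\cdots\sqcup\pi_{p}^{\ast}$, and conversely that this Bellman-type decomposition forces the implication (\ref{croiss_hierarch}).

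For the sufficiency (``if''), I would fix an arbitrary cut $\pi(T_{1})\sqcup\cdots\sqcup\pi(T_{p})\in\Pi^{\prime}(S)$ and pass to $\pi_{1}^{\ast}\sqcup\cdots\sqcup\pi_{p}^{\ast}$ by replacing the sons one at a time. At step $k$ the two partial partitions $\pi_{k}^{\ast}$ and $\pi(T_{k})$ share the common support $T_{k}$, while the remaining classes $\pi_{0}=\pi_{1}^{\ast}\sqcup\cdots\sqcup\pi_{k-1}^{\ast}\sqcup\pi(T_{k+1})\sqcup\cdots\sqcup\pi(T_{p})$ form a partial partition whose support is disjoint from $T_{k}$. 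Since $\pi_{k}^{\ast}$ is a minimum cut of $T_{k}$ we have $\omega(\pi_{k}^{\ast})\leq\omega(\pi(T_{k}))$, so (\ref{croiss_hierarch}) gives $\omega(\pi_{k}^{\ast}\sqcup\pi_{0})\leq\omega(\pi(T_{k})\sqcup\pi_{0})$. Chaining these $p$ inequalities telescopes to $\omega(\pi_{1}^{\ast}\sqcup\cdots\sqcup\pi_{p}^{\ast})\leq\omega(\pi(T_{1})\sqcup\cdots\sqcup\pi(T_{p}))$, so the glued optimum beats every competitor in $\Pi^{\prime}(S)$; the minimum cut of $S$ is then whichever of $\pi_{1}^{\ast}\sqcup\cdots\sqcup\pi_{p}^{\ast}$ and $S$ carries the smaller energy, which is exactly the claimed form.

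For the necessity (``only if''), I would argue by contraposition. If $\omega$ is not $h$-increasing there exist $\pi_{0},\pi_{1},\pi_{2}$, with $\pi_{1},\pi_{2}$ of common support and $\pi_{0}$ disjoint from them, such that $\omega(\pi_{1})\leq\omega(\pi_{2})$ yet $\omega(\pi_{2}\sqcup\pi_{0})<\omega(\pi_{1}\sqcup\pi_{0})$. I would realize this configuration inside a hierarchy by taking a node $S$ with two sons $T_{1},T_{2}$, arranging $H(T_{1})$ so that $\pi_{1}$ and $\pi_{2}$ are both cuts of $T_{1}$ and $H(T_{2})$ so that $\pi_{0}$ is the minimum cut of $T_{2}$; this is admissible because the nesting condition (\ref{hierarchy 2}) only constrains classes sharing a common support, and the two supports are disjoint. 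Then $\pi_{1}$ is a minimum cut of $T_{1}$, so the prescribed candidate is $\pi_{1}\sqcup\pi_{0}$, whereas $\pi_{2}\sqcup\pi_{0}\in\Pi^{\prime}(S)$ is strictly cheaper and distinct from it; choosing $\omega(S)$ large enough that it also undercuts the single-class cut, the true minimum cut of $S$ is neither of the two permitted forms, contradicting the hypothesis.

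The telescoping in the sufficiency direction is routine once the support and disjointness hypotheses of (\ref{croiss_hierarch}) are checked at each replacement. The delicate point, on which I expect to spend most care, is the necessity construction: verifying that the abstract triple $(\pi_{0},\pi_{1},\pi_{2})$ genuinely embeds as cuts of two sibling sub-hierarchies obeying (\ref{hierarchy 2}), and that no unintended cheaper cut of a third form is created by the construction.
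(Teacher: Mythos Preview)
Your sufficiency argument is correct and is the paper's argument, spelled out: the paper simply asserts that $h$-increasingness forces $\pi_{1}^{\ast}\sqcup\cdots\sqcup\pi_{p}^{\ast}$ to minimise over $\Pi^{\prime}(S)$ and then invokes the decomposition $\Pi(S)=\Pi^{\prime}(S)\cup S$; your one-son-at-a-time telescoping makes explicit the inductive mechanism behind that assertion.

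The necessity direction is where you diverge from the paper, and where there are genuine gaps. The paper does not argue by contraposition from an arbitrary violation of (\ref{croiss_hierarch}); it exhibits one concrete non-$h$-increasing energy (a perimeter energy with a threshold subtraction, system (\ref{Q})) on one concrete three-level hierarchy in the plane, and computes the minimum cut by hand to show it is neither $E$ nor the glued son-optima. That establishes only that the conclusion \emph{can} fail without $h$-increasingness, not that \emph{every} non-$h$-increasing $\omega$ admits such a failure.

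Your stronger contrapositive runs into obstacles that you flag but do not overcome. First, you write ``choosing $\omega(S)$ large enough'': but $\omega$ is the given energy and $S$ is forced to be the union of the two supports, so $\omega(S)$ is not a free parameter; if it happens to undercut everything in $\Pi^{\prime}(S)$, the minimum cut is $S$ and no contradiction arises. Second, to make both $\pi_{1}$ and $\pi_{2}$ cuts of the same sub-hierarchy $H(T_{1})$, every class of $\pi_{1}$ meeting a class of $\pi_{2}$ must be nested in it or contain it (condition (\ref{hierarchy 2})); two arbitrary same-support partitions need not satisfy this. Third, even when the embedding succeeds, the hierarchies of Definition~1 terminate at the singleton partition, so $\Pi(T_{1})$ contains many cuts beyond $\pi_{1}$, $\pi_{2}$ and $\{T_{1}\}$; you cannot force $\pi_{1}$ to be \emph{the} minimum cut of $T_{1}$ without controlling the energies of all of these, and again $\omega$ is not yours to adjust. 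None of these points is obviously repairable in general, which is presumably why the paper settles for a single explicit counter-example rather than the abstract construction you sketch.
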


\begin{proof}
We firstly prove that the condition is sufficient. The $h$-increasingness
implies of the energy implies that cut (\ref{hier 4}) has the lowest energy
among all the cuts of type $\Pi^{\prime}(S)=$ $\sqcup\{\pi(T_{k});1\leq k\leq
p\}$ (it does not follow that it is unique). Now, from the decomposition
(\ref{hier 5}), every cut of $S$ is either an element of $\Pi^{\prime}(S)$, or
$S$ itself. Therefore, the set formed by the cut (\ref{hier 4}) and $S$
contains one minimum cut of $S$\ at least.

We will prove that the $h$-increasingness is necessary by means of a
counter-example. Consider the hierarchies of $n$ levels in $E=\mathbb{R}^{2}$,
and associate an energy with the lengths of the frontiers as follows
\begin{align}
\omega(\pi(S))  &  =\omega(T_{1}\sqcup..T_{u}..\sqcup T_{q})=\sum
\limits_{1\leq u\leq q}(\partial T_{u}),\text{ \ \ \ \ \ \ \ \ when
\ \ \ \ \ }\sum\limits_{1\leq u\leq q}(\partial T_{u})\leq5\label{Q}\\
\omega(\pi(S))  &  =\sum\limits_{1\leq u\leq q}(\partial T_{u})-5\text{
\ \ \ \ \ when not.}\nonumber
\end{align}
where $\{T_{u},1\leq u\leq q\}$ are the sons of node $S$. Calculate the
minimum cut of the three levels partition depicted in Fig.\ref{counter_ex1} a)
b) and c). The size of the square is $1$ and one must take half the length for
the external edges. We find $\omega(T_{1}\sqcup T_{2})=3$ and $\omega(S)=2$.
Hence $S$ is the minimum cut of its own sub-hierarchy. So does $S^{\prime}$.
At the next level we have $\omega(S\sqcup S^{\prime})=4 $, and $\omega(E)=3$.
Nevertheless, $E$ is not the minimum cut, because $\omega(T_{1}\sqcup
T_{2}\sqcup T_{1}^{\prime}\sqcup T_{2}^{\prime})=6-5=1! $
\end{proof}

%

\begin{figure}
[ptb]
\begin{center}
\ifcase\msipdfoutput
\includegraphics[
height=1.4209in,
width=5.4544in
]%
{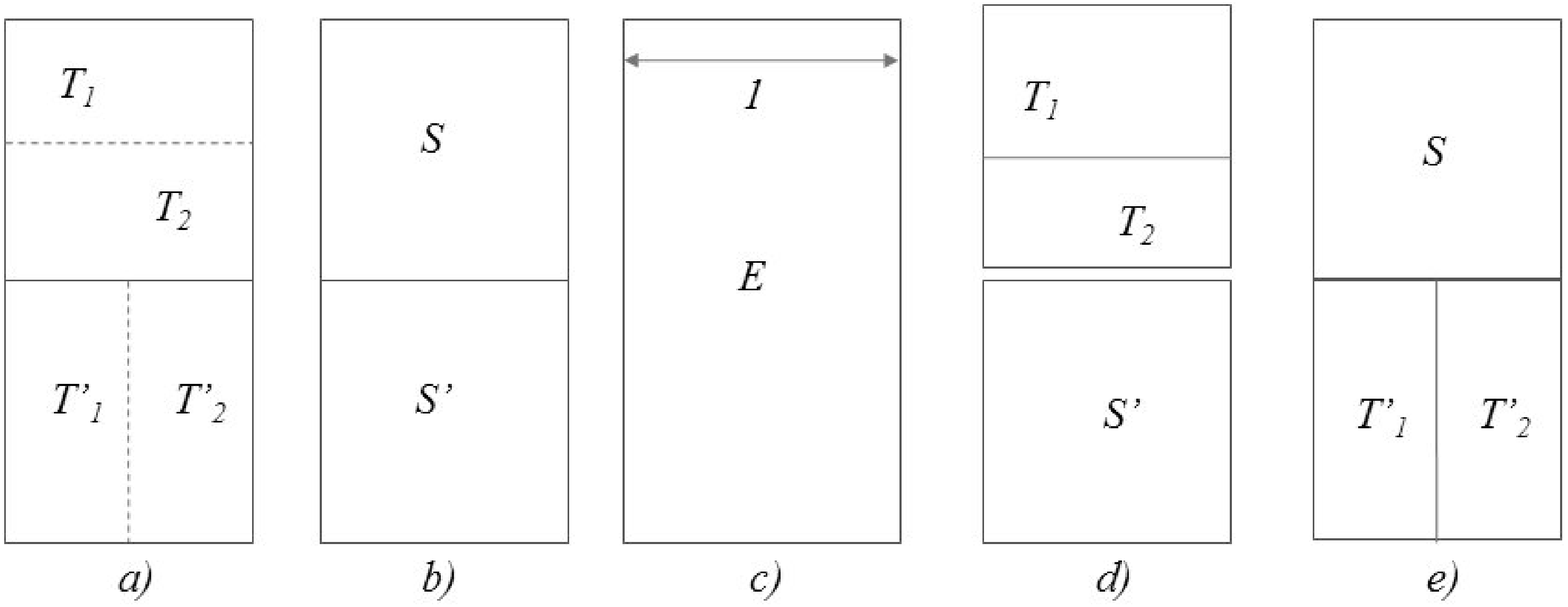}%
\else
\includegraphics[
height=1.4209in,
width=5.4544in
]%
{C:/SWtexts/2012/climbing_on_pyramids_2012/graphics/counter_ex1__5.pdf}%
\fi
\caption{\textit{a}), \textit{b}),\textit{c}): three levels of a hierarchy
$H$. Partition a) is the minimum cut for energy (\ref{Q}), which contradicts
hierarchical increasingness. Partitions \textit{d}) and \textit{e}) are two
minimum cuts of $H$ for the sequence (\ref{QQ}) of energies. They contradict
scale increasingness. \ }%
\label{counter_ex1}%
\end{center}
\end{figure}

The condition of $h$-increasingness (\ref{croiss_hierarch}) opens into a broad
range of energies, and is easy to check. It encompasses the case of the
separable energies \cite{GUI03} \cite{Sal20}, as well as energies composed by
suprema \cite{ANG06a} \cite{SOI08a} \cite{ZAN99}. Computationally, it yields
to the following Guigues'algorithm:

\begin{itemize}
\item \ scan in one pass all nodes of $H$ according to an ascending
lexicographic order ;

\item determine at each node $S$ a temporary minimum cut of $H$ by comparing
the energy of $S$ to that of the concatenation of the temporary minimum cuts
of the (already scanned) sons $T_{k}$ of $S$ .
\end{itemize}

\subsection{Single minimum cuts}

It may happen that in the family $\Pi(S)$ of Relation (\ref{hier 5}) a minimum
cut of $\Pi^{\prime}(S)$ has the same energy as that of $S$. This event
introduces two solutions which are then carried over the whole induction. And
since such a doublet can occur regarding any node $S$ of $H$, the family
$M=M(H)$ of all minimum cuts may be very comprehensive. However, $M$ turns out
to be structured as a complete lattice for the ordering of the refinement,
where the sup (resp. the inf) are obtained by taking the union (resp. the
intersection) of the classes \cite{SER11a}.

\bigskip

The risk of several minimum cuts w.r.t. $\omega$ may becomes sometimes
cumbersome. Then one can always ensure unicity by slightly modifying a
$h$-increasing energy $\omega$. Denote by $m$ the minimum of all the positive
differences of energies involved in the partial partitions of $H$, i.e.%
\[
m=\inf\{\omega(\pi)-\omega(\pi^{\prime}),\omega(\pi)<\omega(\pi^{\prime
})\}\text{ \ \ }\pi,\pi^{\prime}\in\widetilde{\Pi}(H).
\]

As the cardinal of $\widetilde{\Pi}$ is finite, $m$ is strictly positive.
Therefore, one can find a $\varepsilon$ such as $0<\varepsilon<m$, and state
the following:

\begin{proposition}
\label{unicity}Let $\omega$ be a $h$-increasing energy over $\widetilde{\Pi}$.
Introduce the additional energy $\omega^{\prime}$ for all $\{\pi(S)\in\Pi(S),$
$S\in\mathcal{S}\}$
\begin{align*}
\omega^{\prime}[\pi(S)]  &  =\varepsilon,\text{ \ \ \ \ \ \ }\pi(S)\neq\{S\}\\
\omega^{\prime}[S]  &  =0,
\end{align*}
with $0<\varepsilon<m$. Then the sum $\omega+\omega^{\prime}$ is
$h$-increasing and associates a unique minimum cut with each sub-hierarchy
$H(S)$. When $\omega\lbrack\pi^{\ast}(S)]\neq$ $\omega\lbrack\{S\}]$, then the
minimum cut for $\omega+\omega^{\prime}$ is $\pi^{\ast}(S)$, and it is $\{S\}$
itself when $\{S\}$ and $\pi^{\ast}(S)$ have the same $\omega$-energy.%
\begin{align}
\omega\lbrack\pi^{\ast}(S)]  &  \neq\omega\lbrack\{S\}]\text{ \ \ }%
\Rightarrow\text{ \ \ }\pi^{\ast}(S)\text{ best cut}\label{elements7}\\
\omega\lbrack\pi^{\ast}(S)]  &  =\omega\lbrack\{S\}]\text{ \ \ }%
\Rightarrow\text{ \ \ }\{S\}\text{ best cut}\nonumber
\end{align}

\end{proposition}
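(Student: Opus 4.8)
The plan is to verify the three claimed conclusions in turn: (i) that $\omega+\omega'$ is again $h$-increasing, (ii) that each sub-hierarchy $H(S)$ acquires a \emph{unique} minimum cut under $\omega+\omega'$, and (iii) that this unique cut is precisely the one described by the dichotomy \eqref{elements7}. The governing idea throughout is that $\omega'$ is a tiny perturbation, smaller than the smallest gap $m$ between distinct $\omega$-values, so it can only break ties and never overturn a strict inequality.

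First I would check $h$-increasingness of $\omega+\omega'$. Fix $\pi_0$ disjoint from $\pi_1,\pi_2$ with $\pi_1,\pi_2$ of common support, and suppose $(\omega+\omega')(\pi_1)\le(\omega+\omega')(\pi_2)$. I would argue that $\omega'$ depends only on whether a partial partition is the trivial one-class cut $\{S\}$ or a genuine cut, and that concatenating a fixed $\pi_0$ affects the $\omega'$-increment on both sides identically; since $\pi_1,\pi_2$ share a support they contribute the same $\omega'$-bookkeeping, so the $\omega'$-terms cancel consistently and the inequality on $\omega+\omega'$ reduces to the one on $\omega$ alone, to which the hypothesis that $\omega$ is $h$-increasing applies. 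The care here is in treating the boundary case where one of $\pi_1,\pi_2$ is the single class $\{S\}$ and the other is not, so that the $\omega'$-values genuinely differ; but because such a difference is exactly $\varepsilon<m$, it cannot reverse any strict $\omega$-inequality, and on the remaining non-strict cases the conclusion is immediate.

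Next I would establish uniqueness together with the explicit form \eqref{elements7} by induction on the nodes $S$ of $H$ in ascending lexicographic order, exactly as in the Guigues-style argument underlying Theorem~\ref{coupe_min}. At each node $S$, by the induction hypothesis each son $T_k$ already has a unique minimum cut for $\omega+\omega'$; then by Theorem~\ref{coupe_min} the minimum cut of $S$ is either the concatenation $\pi^\ast(S)$ of these son-cuts or the trivial cut $\{S\}$, and the decision compares $(\omega+\omega')[\pi^\ast(S)]$ with $(\omega+\omega')[\{S\}]=\omega[\{S\}]$. Writing the comparison as $\omega[\pi^\ast(S)]+\varepsilon$ versus $\omega[\{S\}]$, the choice of $\varepsilon<m$ forces a strict decision in every case: if $\omega[\pi^\ast(S)]<\omega[\{S\}]$ then the gap is at least $m>\varepsilon$, so $\pi^\ast(S)$ wins strictly, giving the first line of \eqref{elements7}; if instead $\omega[\pi^\ast(S)]\ge\omega[\{S\}]$, including the tie case $\omega[\pi^\ast(S)]=\omega[\{S\}]$, then adding $\varepsilon$ makes $\{S\}$ win strictly, giving the second line. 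In either case the winner is strict, so no doublet survives and the minimum cut at $S$ is unique, which closes the induction.

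The main obstacle I anticipate is the first step rather than the induction: one must pin down precisely how $\omega'$ behaves under the concatenation $\sqcup$ and under restriction to a common support, since $\omega'$ is defined per-cut via the flag $\pi(S)\neq\{S\}$ and its interaction with $\sqcup$ is what must be shown not to spoil implication \eqref{croiss_hierarch}. Once the accounting of $\omega'$ across the two sides of the $h$-increasingness implication is made rigorous, the separation-of-scales argument ($\varepsilon<m$) does all the remaining work, and both uniqueness and the dichotomy \eqref{elements7} follow cleanly from Theorem~\ref{coupe_min} by induction.
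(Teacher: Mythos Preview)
Your approach is correct and matches the paper's: the paper likewise proves $h$-increasingness of $\omega+\omega'$ by a case analysis on whether $\pi_1$ or $\pi_2$ equals $\{S\}$, first observing that $\omega'(\pi_1\sqcup\pi_0)=\omega'(\pi_2\sqcup\pi_0)=\varepsilon$ in all cases, and then invoking $\varepsilon<m$ in the one asymmetric case to recover $\omega(\pi_1)\le\omega(\pi_2)$ from the perturbed inequality. The paper's written proof in fact stops at $h$-increasingness and leaves the uniqueness and the dichotomy \eqref{elements7} implicit, so your inductive sketch via Theorem~\ref{coupe_min} goes a little further than what the paper spells out.
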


\begin{proof}
Let us denote by small letters the main inequalities of the proof%
\begin{align*}
(a)\text{\ \ \ \ \ \ \ \ \ \ \ \ }\omega(\pi_{1})  &  \leq\omega(\pi_{2})\\
(b)\text{\ \ \ \ \ \ \ \ \ \ \ \ }\omega(\pi_{1}\sqcup\pi_{0})  &  \leq
\omega(\pi_{2}\sqcup\pi_{0})\\
(c)\text{\ \ \ \ \ \ \ \ \ \ \ \ }\omega(\pi_{1})+\text{\ }\omega^{\prime}%
(\pi_{1})  &  \leq\omega(\pi_{2})+\text{\ }\omega^{\prime}(\pi_{2})\\
(d)\text{\ }\omega(\pi_{1}\sqcup\pi_{0})+\text{\ }\omega^{\prime}(\pi
_{1}\sqcup\pi_{0})  &  \leq\omega(\pi_{2}\sqcup\pi_{0})+\text{\ }%
\omega^{\prime}(\pi_{2}\sqcup\pi_{0})
\end{align*}
We suppose that $(a)\Rightarrow(b)$, and must prove that then $(c)\Rightarrow
(d)$. Let $S_{0}$ be the support of partition $\pi_{0}$. Note firstly that the
equalities $\omega^{\prime}(\pi_{1}\sqcup\pi_{0})=\omega^{\prime}(\pi
_{2}\sqcup\pi_{0})=\varepsilon$ always hold, since $\pi_{1}\sqcup\pi_{0}%
\neq\{S\cup S_{0}\}$ and $\pi_{2}\sqcup\pi_{0}\neq\{S\cup S_{0}\}$.
Distinguish three cases:

1/ $\pi_{1}\neq\{S\}$ and $\pi_{2}\neq\{S\}$. Obviously, $(c)\Rightarrow(a)$
$\Rightarrow(b)\Rightarrow(d)$;

2/ $\pi_{1}\neq\{S\}$ and $\pi_{2}=\{S\}$. Again, $(c)\Rightarrow(a)$
$\Rightarrow(b)\Rightarrow(d)$;

3/ $\pi_{1}=\{S\}$ and $\pi_{2}\neq\{S\}$. Inequality $(c)$ becomes
$\omega(\pi_{1})\leq\omega(\pi_{2})+$\ $\varepsilon$. If $\omega(\pi
_{1})>\omega(\pi_{2})$, then $\omega(\pi_{1})\geq\omega(\pi_{2})+m>\omega
(\pi_{2})+\varepsilon$, which is impossible, hence $\omega(\pi_{1})\leq
\omega(\pi_{2})$. This implies, as previously\bigskip, $(b)$ and then $(c)$.
\end{proof}

\bigskip

Note that the impact of $\omega^{\prime}$ is reduced to the case of equality
$\omega\lbrack\pi^{\ast}(S)]=$ $\omega\lbrack\{S\}]$, and that $\omega
+$\ $\omega^{\prime}$ can be taken arbitrary close to $\omega$, but different
from it. Proposition \ref{unicity} turns out to be a particular case of the
more general, and more useful result

\begin{corollary}
\label{unicity2}Define the additive energy $\omega^{\prime}$ by the relations%
\begin{align*}
\omega^{\prime}[\pi(S)]  &  =\varepsilon,\text{ \ \ \ when\ \ \ \ }\pi
(S)\neq\{S\}\text{ and }\omega\lbrack\pi(S)]\text{\ }\leq\omega_{0}\\
\omega^{\prime}[\pi(S)]  &  =0,\text{ \ \ \ when\ \ \ \ }\pi(S)=\{S\}\text{
and }\omega\lbrack\pi(S)]\text{\ }\leq\omega_{0}\\
\omega^{\prime}[\pi(S)]  &  =\varepsilon,\text{ \ \ \ when\ \ \ \ }%
\pi(S)=\{S\}\text{ and }\omega\lbrack\pi(S)]\text{\ }>\omega_{0}\\
\omega^{\prime}[\pi(S)]  &  =0,\text{ \ \ \ when\ \ \ \ }\pi(S)\neq\{S\}\text{
and }\omega\lbrack\pi(S)]\text{\ }>\omega_{0}.
\end{align*}
Then $\omega+\omega^{\prime}$ is $h-$increasing and always leads to a unique
optimum cut.
\end{corollary}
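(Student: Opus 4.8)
The plan is to read this as a strengthening of Proposition \ref{unicity} and to recycle its proof, the only new feature being the threshold $\omega_{0}$. Write $\eta=\omega+\omega'$ and recall that $0<\varepsilon<m$, where $m$ is the least positive difference between two energies of $\widetilde{\Pi}$. The single arithmetic fact driving everything is that, since $\omega'$ takes only the values $0$ and $\varepsilon$, the $\eta$-order agrees with the $\omega$-order whenever the latter is strict (a gap $\geq m>\varepsilon$ cannot be reversed by adding something smaller than $\varepsilon$), so $\omega'$ can only act as a tie-break between partial partitions of equal $\omega$-energy. I would first prove that $\eta$ is $h$-increasing, keeping the four inequalities $(a)$--$(d)$ of Proposition \ref{unicity}, now with $\eta$ in $(c)$ and $(d)$, assuming $(a)\Rightarrow(b)$ and deriving $(c)\Rightarrow(d)$.

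For the $h$-increasing step, let $S_{0}$ be the support of $\pi_{0}$. As in Proposition \ref{unicity}, both $\pi_{1}\sqcup\pi_{0}$ and $\pi_{2}\sqcup\pi_{0}$ carry at least two classes, hence are never the one-class partition of $S\cup S_{0}$. The point that replaces the identity ``$\omega'(\pi_{1}\sqcup\pi_{0})=\omega'(\pi_{2}\sqcup\pi_{0})=\varepsilon$'' of the previous proof is that on a non-trivial partial partition $\omega'$ depends only on the sign of $\omega-\omega_{0}$; hence equal $\omega$-values on the two concatenations force equal $\omega'$-values, whatever side of $\omega_{0}$ they lie on. I would then split on $\omega(\pi_{1})$ versus $\omega(\pi_{2})$. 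The case $\omega(\pi_{1})>\omega(\pi_{2})$ is excluded by $(c)$, since then $\omega(\pi_{1})\geq\omega(\pi_{2})+m>\omega(\pi_{2})+\varepsilon$ would already give $\eta(\pi_{1})>\eta(\pi_{2})$. If $\omega(\pi_{1})<\omega(\pi_{2})$ then $(b)$ holds, and either the gap on the concatenations is again $\geq m$ (so $(d)$ is strict) or it is $0$ (so the two $\omega'$-values coincide and $(d)$ holds with equality). If $\omega(\pi_{1})=\omega(\pi_{2})$, applying $h$-increasingness of $\omega$ in both directions gives $\omega(\pi_{1}\sqcup\pi_{0})=\omega(\pi_{2}\sqcup\pi_{0})$, whence equal $\omega'$ and $(d)$ once more. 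In every surviving case $(d)$ holds, so $\eta$ is $h$-increasing and Theorem \ref{coupe_min} applies to it.

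For uniqueness I would argue by induction on the nodes of $H$, the leaves being trivial. Assume each son $T_{k}$ of $S$ has a single $\eta$-minimum cut $\widehat{\pi}(T_{k})$, and set $\pi^{\ast}(S)=\widehat{\pi}(T_{1})\sqcup\cdots\sqcup\widehat{\pi}(T_{p})$. By Theorem \ref{coupe_min} this concatenation realises the minimum of $\eta$ over $\Pi^{\prime}(S)$, and the minimum cut of $H(S)$ is $\pi^{\ast}(S)$ or $\{S\}$; it then suffices to see that these two never tie in $\eta$. If $\omega(\{S\})\neq\omega(\pi^{\ast}(S))$ the gap is $\geq m>\varepsilon$ and $\eta$ separates them. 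If $\omega(\{S\})=\omega(\pi^{\ast}(S))=v$, then $\{S\}$ is trivial while $\pi^{\ast}(S)$ is not, so the table of $\omega'$ assigns them the pair $(0,\varepsilon)$ when $v\leq\omega_{0}$ and the pair $(\varepsilon,0)$ when $v>\omega_{0}$; in both cases $\eta$ separates them, the tie being broken towards $\{S\}$ below $\omega_{0}$ and towards $\pi^{\ast}(S)$ above it. This is exactly the behaviour announced after Proposition \ref{unicity}, now made to depend on $\omega_{0}$.

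The step I expect to be the real obstacle is hidden in the phrase ``$\pi^{\ast}(S)$ realises the minimum over $\Pi^{\prime}(S)$'': I also need it to be the \emph{only} minimiser there, for otherwise two distinct concatenations would both be optimal and uniqueness would fail at $S$. This does not follow from $h$-increasingness alone, which yields only a non-strict inequality when a son cut is improved: if some son admits two cuts of equal $\omega$-energy, the corresponding concatenations can again have equal $\omega$-energy, hence equal $\eta$. Closing the gap forces one to make the secondary criterion strictly refinement-monotone rather than merely two-valued, i.e. to read $\omega'$ additively so that a strictly worse son cut contributes strictly more to $\eta$ and the son-level separation propagates up the tree. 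Verifying that such an accumulating $\omega'$ still matches the node-wise table above and remains compatible with the sign flip of the tie-break at $\omega_{0}$ is, to my mind, the delicate heart of the proof, and the place where the argument genuinely goes beyond the verbatim reasoning of Proposition \ref{unicity}.
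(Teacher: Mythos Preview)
Your $h$-increasingness argument is essentially the paper's own. The paper's proof of Corollary~\ref{unicity2} is a two-line reduction to Proposition~\ref{unicity}: when $\omega[\pi(S)]\leq\omega_{0}$ one is exactly in the situation of that proposition, and when $\omega[\pi(S)]>\omega_{0}$ the roles of cases~2 and~3 in its proof are interchanged. Your case split on $\omega(\pi_{1})$ versus $\omega(\pi_{2})$ amounts to the same manoeuvre, unpacked with more care about where $\omega_{0}$ sits relative to the concatenated energies; the driving device in both arguments is that $\varepsilon<m$ forces the $\eta$-order to agree with the $\omega$-order off ties.

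Where you depart from the paper is in the uniqueness paragraph, and there you have put your finger on a genuine issue. The paper stops after $h$-increasingness and treats the tie-break at each node between $\{S\}$ and the concatenation of son optima as the only ambiguity $\omega'$ must resolve. The obstacle you raise---that two \emph{distinct} cuts in $\Pi'(S)$ can share the same $\omega$-value, hence (both being non-trivial and on the same side of $\omega_{0}$) the same $\omega'$-value, hence the same $\eta$-value---is real and is not addressed by the paper's proof either; nor does $h$-increasingness help, since it delivers only the non-strict inequality you note. Your instinct that a cumulative, class-additive perturbation would be needed to push strictness through concatenation is sound, but that would change the definition of $\omega'$ and is not what the corollary states. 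In short: on $h$-increasingness you match the paper, just more explicitly; on uniqueness you go further than the paper, correctly identify a gap that the paper's proof also leaves open, and---like the paper---do not close it.
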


\begin{proof}
When $\omega\lbrack\pi(S)]$\ $\leq\omega_{0}$, we meet again the previous
proposition \ref{unicity}. When $\omega\lbrack\pi(S)]$\ $>\omega_{0}$, then we
still have the triple distinction of the previous proof, the only difference
being that case 2/ and 3/ are inverted, which achieves the proof.
\end{proof}

\bigskip

Unlike in Proposition \ref{unicity}, in case of equality $\omega\lbrack
\pi^{\ast}(S)]=$ $\omega\lbrack\{S\}]$, the optimum cut is now $\{S\}$ when
$\omega\lbrack\pi(S)]$\ $\leq\omega_{0}$ and $\pi^{\ast}(S)$ when not.
Corollary \ref{unicity2} is used for example for proving that Soille's energy
is $h$-increasing in section \ref{soille} below.

\bigskip

The result (\ref{elements7}) is a top-down property:

\begin{proposition}
\label{top-down}Let $\pi^{\ast}(E)$ be the single minimum cut of a hierarchy
$H$ w.r. to a $h$-increasing energy $\omega$, and let $S$ be a node of $H$. If
$\pi^{\ast}(E)$ meets the sub-hierarchy $H(S)$ of summit $S $, then the
restriction $\pi^{\ast}(S)$ of $\pi^{\ast}(E)$ to $H(S)$ is the single minimum
cut of the sub-hierarchy $H(S).$
\end{proposition}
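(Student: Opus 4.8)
The plan is to run an exchange (cut-and-paste) argument built directly on $h$-increasingness, closed off by the global uniqueness hypothesis. First I would unpack the phrase ``$\pi^{\ast}(E)$ meets $H(S)$'': it says that $S$ is a union of classes of $\pi^{\ast}(E)$, i.e.\ no class of the cut strictly contains $S$. By the nesting/disjointness property (\ref{hierarchy 2}) of classes taken in $\mathcal{S}$, each class of $\pi^{\ast}(E)$ is then either contained in $S$ or disjoint from $S$, so those contained in $S$ form a genuine cut $\pi^{\ast}(S)\in\Pi(S)$ of support exactly $S$. This yields the decomposition $\pi^{\ast}(E)=\pi^{\ast}(S)\sqcup\pi_{0}$, where $\pi_{0}$ is the restriction of $\pi^{\ast}(E)$ to $E\setminus S$, a partial partition disjoint from every cut of $H(S)$.

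Next I would fix an arbitrary minimum cut $\rho(S)$ of the family $\Pi(S)$; a minimizer exists because $\Pi(S)$ is finite, and by definition $\omega(\rho(S))\leq\omega(\pi^{\ast}(S))$. Since the classes of $\rho(S)$ lie in $\mathcal{S}(H(S))\subseteq\mathcal{S}$ and $\pi_{0}$ covers $E\setminus S$, the concatenation $\rho(S)\sqcup\pi_{0}$ is again a cut in $\Pi(E)$. Applying $h$-increasingness (\ref{croiss_hierarch}) with $\pi_{1}=\rho(S)$, $\pi_{2}=\pi^{\ast}(S)$ (same support $S$) and common complement $\pi_{0}$ propagates the inequality to
\[
\omega(\rho(S)\sqcup\pi_{0})\leq\omega(\pi^{\ast}(S)\sqcup\pi_{0})=\omega(\pi^{\ast}(E)).
\]

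I would then invoke the global minimality of $\pi^{\ast}(E)$ over $\Pi(E)$: as $\rho(S)\sqcup\pi_{0}\in\Pi(E)$, the reverse inequality $\omega(\pi^{\ast}(E))\leq\omega(\rho(S)\sqcup\pi_{0})$ also holds, forcing equality and making $\rho(S)\sqcup\pi_{0}$ a global minimum cut as well. At this point the single-minimum-cut hypothesis (which Proposition \ref{unicity} shows can always be enforced) gives $\rho(S)\sqcup\pi_{0}=\pi^{\ast}(E)=\pi^{\ast}(S)\sqcup\pi_{0}$; the two sides share the piece $\pi_{0}$ on the disjoint support $E\setminus S$, so cancelling it yields $\rho(S)=\pi^{\ast}(S)$. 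Because $\rho(S)$ was an arbitrary minimizer over $\Pi(S)$, this simultaneously shows that $\pi^{\ast}(S)$ attains the minimum on $H(S)$ and that the minimizer is unique, which is exactly the assertion.

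The step demanding the most care is the bookkeeping around the decomposition $\pi^{\ast}(E)=\pi^{\ast}(S)\sqcup\pi_{0}$: one must check that ``meets $H(S)$'' really produces a partial partition of support precisely $S$ (leaning on the pairwise nested-or-disjoint structure), and that $\pi_{0}$ is legitimately disjoint from \emph{both} $\pi^{\ast}(S)$ and $\rho(S)$ so that the hypothesis of (\ref{croiss_hierarch}) is met verbatim. Everything after that is a one-line propagation of an inequality followed by the cancellation of $\pi_{0}$, so no delicate estimate is involved.
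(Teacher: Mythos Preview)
Your argument is correct and follows essentially the same exchange (cut-and-paste) route as the paper: decompose $\pi^{\ast}(E)$ over $S$ and its complement, replace the $S$-part by a competing cut, propagate the inequality via $h$-increasingness (\ref{croiss_hierarch}), and close with the single-minimum-cut hypothesis. The paper phrases the last step as a contradiction with relation (\ref{elements7}) rather than cancelling the common piece $\pi_{0}$, and it separates out the trivial case $\pi^{\ast}(S)=\{S\}$, but the substance is identical.
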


\begin{proof}
We have to prove Relation (\ref{elements7}). If $\pi^{\ast}(S)=$ $S$, then the
relation is obviously satisfied. Suppose now that the restriction $\pi^{\ast
}(S)$ is different from $S$, and that there exists a cut $\pi_{0}$ of $S$ with
$\omega(\pi_{0})\leq\omega(\pi^{\ast}(S))$. Denote by $\pi^{-}$ the partial
partition obtained when $\pi^{\ast}(S)$ is removed from $\pi^{\ast}(E)$. The
$h$-increasingness then implies that $\omega(\pi_{0}\sqcup\pi^{-})\leq
\omega(S\sqcup\pi^{-})=\omega(\pi^{\ast}(E))$. But by definition of $\pi
^{\ast}(E)$ we also have the reverse inequality, hence $\omega(\pi_{0}%
\sqcup\pi^{-})=\omega(\pi^{\ast}(E))$. Now, this equality contradicts the
Relation (\ref{elements7}) applied to the whole space $E$, so that the
inequality $\omega(\pi_{0})\leq\omega(\pi^{\ast}(S))$ is impossible, and
$\omega(\pi_{0})>\omega(\pi^{\ast}(S))$ for all $\pi_{0}\in\Pi(S)$.
\end{proof}

\subsection{Generation of $h$-increasing energies}

As we saw, the energy $\omega:$ $\mathcal{D}(E)\rightarrow\mathbb{R}^{+\text{
}}$is defined on the family $\mathcal{D}(E)$ of all partial partitions of $E$.
An easy way to obtain a $h$-increasing energy consists in defining it,
firstly, over all sets $S\in\mathcal{P}(E)$, considered as one class partial
partitions $\{S\}$, and then in extending it to partial partitions by some law
of composition. Then, the $h$-increasingness is introduced by the law of
composition, and not by $\omega\lbrack\mathcal{P}(E)]$. The first two modes of
composition which come to mind are, of course, addition and supremum, and
indeed we can state

\begin{proposition}
Let $E$ be a set and $\omega:$ $\mathcal{P}(E)\rightarrow\mathbb{R}^{+\text{
}}$an arbitrary energy defined on $\mathcal{P}(E)$, and let $\pi\in
\mathcal{D}(E)$ be a partial partition of classes $\{S_{i},1\leq i\leq n\}$.
Then the the two extensions of $\omega$ to the partial partitions
$\mathcal{D}(E)$
\[
\omega(\pi)=\vee\{\omega(S_{i}),1\leq i\leq n\}\text{ \ \ and \ \ \ }%
\omega(\pi)=%
{\textstyle\sum}
\{\omega(S_{i}),1\leq i\leq n\}
\]
are $h$-increasing energies.
\end{proposition}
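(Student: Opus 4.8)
The plan is to reduce both cases to a single structural observation about the concatenation operator $\sqcup$, namely that when $\pi_{0}$ has support disjoint from that of $\pi_{1}$ (resp.\ of $\pi_{2}$), the classes of $\pi_{1}\sqcup\pi_{0}$ are exactly the classes of $\pi_{1}$ together with those of $\pi_{0}$, with no merging taking place. Consequently, for any aggregation law $\star$ that is associative and commutative, the composed energy factorizes as
\[
\omega(\pi_{1}\sqcup\pi_{0})=\omega(\pi_{1})\star\omega(\pi_{0}),
\qquad
\omega(\pi_{2}\sqcup\pi_{0})=\omega(\pi_{2})\star\omega(\pi_{0}).
\]
I would establish this factorization first, specialized to $\star\in\{+,\vee\}$, reading it off directly from the definition of $\sqcup$ recalled in Section~3.1: since the supports of $\pi_{0}$ and $\pi_{1}$ are disjoint, the family $\{S_{i}\}$ of summands (resp.\ of terms under the supremum) for $\pi_{1}\sqcup\pi_{0}$ is the disjoint union of the two families, and associativity and commutativity of $+$ and $\vee$ let the aggregate over the union split into the two partial aggregates.

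The second step is to invoke the isotony of each law in one argument: for nonnegative reals, $a\leq b$ implies both $a+c\leq b+c$ and $a\vee c\leq b\vee c$, for every $c\geq 0$. Combining isotony (with $c=\omega(\pi_{0})$) and the factorization yields exactly the implication required by the definition of $h$-increasingness: from $\omega(\pi_{1})\leq\omega(\pi_{2})$ one deduces $\omega(\pi_{1})\star\omega(\pi_{0})\leq\omega(\pi_{2})\star\omega(\pi_{0})$, i.e.\ $\omega(\pi_{1}\sqcup\pi_{0})\leq\omega(\pi_{2}\sqcup\pi_{0})$. Since the same two ingredients serve for both $+$ and $\vee$, the two extensions are treated uniformly in one stroke.

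I do not expect a genuine obstacle here; the statement is essentially a formal consequence of two facts — decomposability of the energy over disjoint concatenation, and monotonicity of $+$ and $\vee$. The only point deserving care is the first step: one must confirm that disjointness of supports truly prevents any class of $\pi_{0}$ from fusing with a class of $\pi_{1}$ or $\pi_{2}$, so that the multiset of summands (resp.\ the family over which the supremum is taken) is literally the disjoint union of the two class families. Once that is pinned down the argument is immediate, and it is worth remarking that it never uses the hypothesis that $\pi_{1}$ and $\pi_{2}$ share the same support; that condition is inherited from the definition of $h$-increasingness but is inert for these two particular compositions.
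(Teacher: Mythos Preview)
Your proposal is correct and matches the paper's own argument essentially line for line: the paper (in Sections~6 and~7) proves each case by writing $\omega(\pi_{1}\sqcup\pi_{0})=\omega(\pi_{1})\star\omega(\pi_{0})$ and $\omega(\pi_{2}\sqcup\pi_{0})=\omega(\pi_{2})\star\omega(\pi_{0})$ for $\star\in\{+,\vee\}$, and then reads off the implication from the isotony of $\star$. Your additional remarks about why the classes do not fuse under $\sqcup$ and about the inertness of the equal-support hypothesis are sound elaborations, but the core idea is identical.
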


We shall study these two models in sections \ref{additive energies} and
\ref{supremum energies}, when they depend on a parameter leading to multiscale
structures. A number of other laws are compatible with $h$-increasingness.
Instead of the supremum and the sum one could use the infimum, the product,
the difference sup-inf, the quadratic sum, and their combinations. Moreover,
one can make depend $\omega$ on more than one class, on the proximity of the
edges, on another hierarchy, etc..

\subsection{Stucture of the $h$-increasing energies}

We now analyze how different $h$-increasing energies interact on a same
hierarchy. The family $\Omega$ of all mappings $\omega:\mathcal{D\rightarrow
}\mathbb{R}^{+}$ forms a complete lattice where%
\[
\omega\leq\omega^{\prime}\text{ \ \ \ }\Leftrightarrow\text{ \ \ \ }\omega
(\pi)\leq\omega^{\prime}(\pi)\text{ \ \ for all }\pi\in\mathcal{D}%
\]
and whose extrema are $\omega(\pi)=0$ and \ $\omega(\pi)=+\infty$. What can be
said about the sub class of$\ \Omega^{\prime}$ $\subseteq\Omega$ of the
$h$-increasing energies? The class $\Omega^{\prime}$ is obviously closed under
addition and multiplication by positive scalars, i.e.%
\begin{equation}
\{\omega^{j}\}\subseteq\Omega^{\prime},\lambda_{j}\geq0\text{ \ \ }%
\Rightarrow\text{ \ \ }%
{\textstyle\sum}
\lambda_{j}\omega^{j}\in\Omega^{\prime} \label{hier 17}%
\end{equation}

Consider\ now a finite family $\{\omega_{i},i\in I\}$ in $\ \Omega^{\prime}$
such that, for \ $\pi_{0},\pi_{1},\pi_{2}\in\mathcal{D}(E),\ \pi_{0}$ disjoint
of $\pi_{1}$ and $\pi_{2}$, we have
\begin{equation}
\omega_{i}(\pi_{1})\leq\omega_{j}(\pi_{2})\text{ \ \ \ \ }\Rightarrow\text{
\ \ }\omega_{i}(\pi_{1}\sqcup\pi_{0})\leq\ \omega_{j}(\pi_{2}\sqcup\pi
_{0}),\text{ \ \ }\pi_{0},\pi_{1},\pi_{2}\in\mathcal{D}\text{\ \ }
\label{hier 13}%
\end{equation}
a relation that generalizes the $h$-increasingness (\ref{croiss_hierarch}).

\begin{proposition}
The family of those energies that satisfy implication (\ref{hier 13}) is a
finite sub-lattice $\Omega^{\prime}$of $\Omega$, and for any family
$\{\omega_{i},i\in I\}$ in $\Omega^{\prime}$ we have%
\begin{align}
(\wedge\omega_{i})(\pi)  &  \leq(\wedge\omega_{i})(\pi^{\prime})\text{
\ \ \ }\Rightarrow\text{ \ \ }(\wedge\omega_{i})(\pi\sqcup\pi_{0})\leq
(\wedge\omega_{i})(\pi^{\prime}\sqcup\pi_{0})\label{hier 14}\\
(\vee\omega_{i})(\pi)  &  \leq(\vee\omega_{i})(\pi^{\prime})\text{
\ \ \ }\Rightarrow\text{ \ \ }(\vee\omega_{i})(\pi\sqcup\pi_{0})\leq
(\vee\omega_{i})(\pi^{\prime}\sqcup\pi_{0})
\end{align}

\end{proposition}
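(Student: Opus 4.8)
The plan is to reduce both displayed implications to the two-index hypothesis (\ref{hier 13}) by exploiting the finiteness of the index set $I$, which guarantees that the pointwise supremum and infimum are each attained at a particular index. I would treat the supremum and the infimum separately, since the choice of the ``good'' index differs in the two cases, and I would use only (\ref{hier 13}): its diagonal case $i=j$ is exactly ordinary $h$-increasingness (\ref{croiss_hierarch}), so the individual $h$-increasingness of the $\omega_i$ need not be invoked on its own.

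First, for the supremum, set $\omega=\vee_i\omega_i$ and assume $\omega(\pi)\le\omega(\pi')$. Since $I$ is finite, choose $j^{*}$ with $\omega(\pi')=\omega_{j^{*}}(\pi')$. Then for every fixed $i$ one has $\omega_i(\pi)\le\omega(\pi)\le\omega(\pi')=\omega_{j^{*}}(\pi')$, so applying (\ref{hier 13}) to the pair $(i,j^{*})$ yields $\omega_i(\pi\sqcup\pi_0)\le\omega_{j^{*}}(\pi'\sqcup\pi_0)\le\omega(\pi'\sqcup\pi_0)$. Taking the supremum over $i$ gives $\omega(\pi\sqcup\pi_0)\le\omega(\pi'\sqcup\pi_0)$, which is the second implication.

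Next, for the infimum, set $\omega=\wedge_i\omega_i$ and assume $\omega(\pi)\le\omega(\pi')$. Again by finiteness pick $a$ with $\omega(\pi)=\omega_a(\pi)$. Then for every $j$ we have $\omega_a(\pi)=\omega(\pi)\le\omega(\pi')\le\omega_j(\pi')$, so (\ref{hier 13}) applied to the pair $(a,j)$ gives $\omega_a(\pi\sqcup\pi_0)\le\omega_j(\pi'\sqcup\pi_0)$, whence $\omega(\pi\sqcup\pi_0)\le\omega_a(\pi\sqcup\pi_0)\le\omega_j(\pi'\sqcup\pi_0)$; taking the infimum over $j$ proves (\ref{hier 14}).

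Finally, for the lattice statement, I would note that $\Omega$ ordered pointwise takes values in the chain $\mathbb{R}^{+}$ and is therefore a distributive lattice; the sub-lattice generated by the finite family $\{\omega_i\}$ is thus finite, each element being a join of meets of the $\omega_i$. Closure under $\wedge$ and $\vee$ inside the $h$-increasing cone is precisely what the two implications above supply, and to see that the whole generated sub-lattice remains $h$-increasing one checks that property (\ref{hier 13}) itself propagates to meets and joins by the same index-selection argument applied to pairs of lattice polynomials, which is a routine induction. The one point that must be stressed as the crux is finiteness: it is what lets the sup and the inf be realized at a single index and hence lets the two-index condition (\ref{hier 13}) be invoked at all. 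Merely assuming each $\omega_i$ to be $h$-increasing individually does not suffice to make $\wedge_i\omega_i$ or $\vee_i\omega_i$ $h$-increasing, which is exactly why the stronger cross-index hypothesis (\ref{hier 13}) is imposed.
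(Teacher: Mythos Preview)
Your argument is correct and is essentially the paper's own proof: for the infimum you pick the index $a$ realizing $(\wedge\omega_i)(\pi)$ and feed the chain $\omega_a(\pi)\le\omega_j(\pi')$ into (\ref{hier 13}), exactly as the paper does with its index $i_0$; for the supremum you give the natural dual (choose the index realizing $(\vee\omega_i)(\pi')$), which the paper merely summarizes as ``Same proof for the supremum.'' Your added remarks on the finiteness being the crux and on the lattice closure are accurate elaborations that the paper leaves implicit.
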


\begin{proof}
Consider a family $\{\omega_{i},i\in I\}$ which satisfy the $h$-increasingness
(\ref{hier 13}). Suppose that $(\wedge\omega_{i})(\pi)\leq(\wedge\omega
_{i})(\pi^{\prime})$, and let $i_{0}$ be the parameter of the smallest of the
$\omega_{i}(\pi)$. Then we have $\omega_{i_{0}}(\pi)\leq\omega_{i}(\pi
^{\prime}),i\in I$, hence, from Rel.(\ref{hier 13}), $\omega_{i_{0}}(\pi
\sqcup\pi_{0})\leq\omega_{i}(\pi^{\prime}\sqcup\pi_{0})$, so that
$\omega_{i_{0}}(\pi\sqcup\pi_{0})\leq(\wedge\omega_{i})(\pi^{\prime}\sqcup
\pi_{0})$, and finally $(\wedge\omega_{i})(\pi\sqcup\pi_{0})\leq(\wedge
\omega_{i})(\pi^{\prime}\sqcup\pi_{0})$. Energy $(\wedge\omega_{i})$ is thus
$h$-increasing. Same proof for the supremum.
\end{proof}

\bigskip

Note that the infimum cut $\pi_{\wedge\omega_{i}}$ related to energy
$\wedge\omega_{i}$ \textit{is not} the infimum $\wedge\pi_{\omega_{i}}^{\ast}$
of the minimum cuts generated by the $\omega_{i}$. It has to be computed
directly (dual statement for the supremum).

\section{climbing energies}

The usual energies are often given by finite sequences $\{\omega^{j},1\leq
j\leq p\}$ that depend on a positive index, or parameter, $j$. Therefore, the
processing of hierarchy $H$ results in a sequence of $p$ optimum cuts
$\pi^{j\ast}$, of labels $1\leq j\leq p$. A priori, the $\pi^{j\ast}$ are not
ordered, but is they were, i.e. if
\[
j\leq k\ \ \ \Rightarrow\ \ \ \pi^{j\ast}\leq\pi^{k\ast},\ \ \ \ \ j,k\in
J,\qquad
\]
\ \ \ then we should obtain a nice progressive simplification of the optima.
We now seek the conditions that permit such an increasingness.

\bigskip

Consider a finite family of energies $\{\omega^{j},1\leq j\leq p\}$ on all
partial partitions $\mathcal{D}(E)$ of set $E$ , and apply these energies to
the partial partitions $\widetilde{\Pi}(H)$ of hierarchy $H$ (Relation
(\ref{guigues 3})). The family $\{\omega^{j}\}$, not totally arbitrary, is
supposed to satisfy the following condition of \textit{scale increasingness}%
:\footnote{Scale increasingness is called by L.Guigues "partial causality" in
\cite{GUI03}, p.161, where it appears as a consequence and \ not as a starting
point.}

\begin{definition}
\label{multi-scale} A family of energies $\{\omega^{j},1\leq j\leq p\}$ on
$\mathcal{D}(E)$ is said to be \emph{scale increasing} when for $j\leq k$,
each support $S\in\mathcal{S}$ and each partition $\pi\in\Pi(S)$, we have that%
\begin{equation}
j\leq k\ \ \text{and \ }\omega^{j}(S)\leq\omega^{j}(\pi)\Rightarrow\omega
^{k}(S)\leq\omega^{k}(\pi)\text{, \ \ \ \ \ \ \ }S\in\mathcal{P}(E)\text{.}
\label{hier 1}%
\end{equation}

\end{definition}

In case of a hierarchy $H$, relation (\ref{hier 1}) means that, if $S$ is a
minimum cut w.r. to energy $\omega^{j}$ for a partial hierarchy $\Pi(S)$, then
$S$ remains a minimum cut of $\Pi(S)$ for all energies $\omega^{k}$, $k\geq$
$j$. As $j$ increases, the $\omega^{j}$'s preserve the sense of energetic
differences between the nodes of hierarchy $H$ and their partial partitions.
In particular, all energies of the type $\omega^{j}=j\omega$ are scale increasing.

Axiom (\ref{hier 1}) compares two energies at the same level of $H$, whereas
axiom (\ref{croiss_hierarch}) allows us to compare a same energy at two
different levels. Therefore, the most powerful energies should to be those
which combine scale and $h$-increasingness, i.e.

\begin{definition}
\label{multiscale}We call \emph{climbing energy} any family $\{\omega
^{j},1\leq j\leq p\}$ of energies over $\widetilde{\Pi}$ which satisfies the
three following axioms, valid for $\omega^{j},1\leq j\leq p$ and for all
$\pi\in\Pi(S)$, $S\in\mathcal{S}$

\begin{itemize}
\item i) \emph{$h$-increasingness, }i.e. relation(\ref{croiss_hierarch}%
)\emph{: }%
\[
\omega^{j}(\pi_{1})\leq\omega^{j}(\pi_{2})\ \ \Rightarrow\ \ \omega^{j}%
[(\pi_{1}\sqcup\pi_{0}]\leq\ \omega^{j}[(\pi_{2}\sqcup\pi_{0}],\text{
\ \ \ \ }\pi_{1},\pi_{2}\in H(S)
\]

\item ii) \emph{single minimum cutting:}
\[
\text{either }\omega^{j}[\pi^{\ast}(S)]<\omega^{j}(\pi),\ \ \text{or }%
\ \pi^{\ast}(S)=S,\
\]

\item iii) \emph{scale increasingness, }i.e.\ relation(\ref{hier 1})\emph{\ :}%
\[
j\leq k\ \ \text{and }\omega^{j}(S)\leq\omega^{j}(\pi)\Rightarrow\omega
^{k}(S)\leq\omega^{k}(\pi)\text{, \ \ \ \ \ \ \ }\pi\in H(S)\text{.}%
\]

\end{itemize}
\end{definition}

Under these three assumptions, the climbing energies satisfy the very nice
property to order the minimum cuts with respect to the parameter $j$, namely:

\begin{theorem}
\label{pyram_croit}Let $\{\omega^{j}$, $1\leq j\leq p\}$ be a family of
energies, and let $\pi^{j\ast}$ (resp. $\pi^{k\ast}$) be the minimum cut of
hierarchy $H$ according to the energy $\omega^{j}$ (resp. $\omega^{k}$). The
family $\{\pi^{j\ast}$,$1\leq j\leq p\}$ of the minimum cuts generates a
unique hierarchy $H^{\ast}$ of partitions, i.e.
\begin{equation}
j\leq k\text{ \ \ \ }\Rightarrow\text{ \ \ \ }\pi^{j\ast}\leq\pi^{k\ast
}\text{, \ \ \ \ \ }1\leq j\leq k\leq p \label{hier 6}%
\end{equation}
if and only if the family $\{\omega^{j}\}$ is a climbing energy.
\end{theorem}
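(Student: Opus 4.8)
The plan is to prove both implications by exploiting the recursive description of minimum cuts supplied by Theorem \ref{coupe_min}. For a node $S$ write $\pi^{j\ast}(S)$ for the minimum cut of the sub-hierarchy $H(S)$ relative to $\omega^{j}$; axiom (ii) (single minimum cutting) guarantees this cut is unique, so the notation is unambiguous, and Theorem \ref{coupe_min} (which needs the $h$-increasingness (i)) tells us that for every node $S$ with sons $T_{1},\dots,T_{q}$ one has the dichotomy $\pi^{j\ast}(S)=\{S\}$ or $\pi^{j\ast}(S)=\bigsqcup_{r}\pi^{j\ast}(T_{r})$. Since the global cut is $\pi^{j\ast}=\pi^{j\ast}(E)$, it suffices to prove the stronger statement that $\pi^{j\ast}(S)\le\pi^{k\ast}(S)$ for every node $S$ whenever $j\le k$; I would establish this by induction on the height of $S$, the base case of a leaf $\{x\}$ being trivial since both cuts equal $\{x\}$.

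For the sufficiency the engine of the induction is the following monotonicity lemma: \emph{if} $\pi^{j\ast}(S)=\{S\}$ \emph{then} $\pi^{k\ast}(S)=\{S\}$ for all $k\ge j$. Indeed, $\pi^{j\ast}(S)=\{S\}$ means $\omega^{j}(\{S\})\le\omega^{j}(\pi)$ for every cut $\pi\in\Pi(S)$; applying scale increasingness (iii) to each such $\pi$ yields $\omega^{k}(\{S\})\le\omega^{k}(\pi)$, so $\{S\}$ attains the minimum at scale $k$, and axiom (ii) — which breaks ties in favour of $\{S\}$ — forces $\pi^{k\ast}(S)=\{S\}$. Granting the lemma, I would close the inductive step at a node $S$ by distinguishing two cases on $\pi^{k\ast}(S)$. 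If $\pi^{k\ast}(S)=\{S\}$ then any cut of $H(S)$, in particular $\pi^{j\ast}(S)$, refines it, so $\pi^{j\ast}(S)\le\pi^{k\ast}(S)$ holds automatically. Otherwise the dichotomy gives $\pi^{k\ast}(S)=\bigsqcup_{r}\pi^{k\ast}(T_{r})$, and the contrapositive of the lemma rules out $\pi^{j\ast}(S)=\{S\}$, so the dichotomy also gives $\pi^{j\ast}(S)=\bigsqcup_{r}\pi^{j\ast}(T_{r})$; the induction hypothesis $\pi^{j\ast}(T_{r})\le\pi^{k\ast}(T_{r})$ for each son then concatenates to $\pi^{j\ast}(S)\le\pi^{k\ast}(S)$. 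Taking $S=E$ delivers (\ref{hier 6}).

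For the necessity I would show that dropping any one axiom destroys the conclusion. Uniqueness of the generated $H^{\ast}$ already fails without single minimum cutting (ii): a node carrying two distinct minimum cuts yields two incomparable candidates for $\pi^{j\ast}$, so no single hierarchy is produced. Without $h$-increasingness (i) the decomposition of Theorem \ref{coupe_min} collapses — the explicit counter-example in its proof exhibits a minimum cut that is not built from the sons' minimum cuts — so the induction above has no basis. Finally, if scale increasingness (iii) fails there are $j\le k$, a node $S$ and a cut $\pi\in\Pi(S)$ with $\omega^{j}(\{S\})\le\omega^{j}(\pi)$ but $\omega^{k}(\{S\})>\omega^{k}(\pi)$; embedding $S$ as the summit of a hierarchy whose only competing cuts at $S$ are $\{S\}$ and $\pi$ makes $\pi^{j\ast}=\{S\}$ strictly coarser than $\pi^{k\ast}=\pi$, i.e. $\pi^{j\ast}\not\le\pi^{k\ast}$. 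This is exactly the phenomenon illustrated by partitions d) and e) of Figure \ref{counter_ex1}.

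The delicate points I expect are twofold. On the sufficiency side the whole argument hinges on the monotonicity lemma, and its proof must use scale increasingness against \emph{all} cuts $\pi\in\Pi(S)$ simultaneously and then invoke the tie-breaking built into axiom (ii); getting the quantifiers and the equality case right is the only real subtlety. On the necessity side the obstacle is engineering the counter-example hierarchies so that the isolated axiom violation actually surfaces as a pair of non-nested optimum cuts — in particular one must arrange that $\{S\}$ and $\pi$ are genuinely the optimal cuts at the two scales and are not pre-empted by some finer cut elsewhere in the hierarchy.
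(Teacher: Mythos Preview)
Your sufficiency argument is correct but follows a genuinely different route from the paper's. The paper argues pointwise and by contradiction: fixing $x\in E$ and the classes $S^{j},S^{k}$ of $\pi^{j\ast},\pi^{k\ast}$ at $x$, it supposes $S^{k}\subset S^{j}$, restricts $\pi^{k\ast}$ to $S^{j}$, invokes Proposition~\ref{top-down} (which packages axioms (i) and (ii)) to see that this restriction is the optimum cut of $H(S^{j})$ at scale $k$, and then reads scale increasingness \emph{contrapositively} to contradict optimality of $S^{j}$ at scale $j$. You instead run a bottom-up induction on the height of $S$, driven by your monotonicity lemma $\pi^{j\ast}(S)=\{S\}\Rightarrow\pi^{k\ast}(S)=\{S\}$, which applies axiom (iii) in the \emph{forward} direction together with the tie-breaking of axiom (ii). Your version is more elementary in that it bypasses Proposition~\ref{top-down} entirely and mirrors the recursive structure of the algorithm in Section~5; the paper's is shorter once that proposition is available, and makes the role of (i)--(ii) less visible by hiding them inside the cited proposition.

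On necessity the paper treats only axiom (iii) explicitly, via the concrete energy~(\ref{QQ}) on the hierarchy of Figure~\ref{counter_ex1}, producing two non-comparable optimum cuts; you correctly point to the same example. Your additional remarks on what goes wrong without (i) or (ii) are plausible heuristics but, as you acknowledge, not full counter-examples: turning an abstract violation of (iii) into a pair of non-nested \emph{global} optima really does require building the hierarchy and checking no finer cut pre-empts the intended ones, which is precisely what the paper's explicit construction supplies.
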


\begin{proof}
Assume that axiom $iii)$ of a climbing energy is satisfied, and denote by
$S^{j}$ and $S^{k}$ the two classes of $\pi^{j\ast}$ and $\pi^{k\ast}$ at a
given point $x$. According to Rel.(\ref{hierarchy 2}), we must have either
$S^{j}$ $\subseteq$ $S^{k}$ or $S^{k}\subset S^{j}$. We will prove that the
second inclusion is impossible. Suppose that class $S^{k}\subset S^{j}$. Then
the restriction of the minimum cut $\pi^{k\ast}$ to $S^{j}$ generates a cut,
$\pi_{0}$ say, of $S^{j}$. According to proposition \ref{top-down}, which
involves axioms $i)$ and $ii)$ of a climbing energy, the restriction $\pi_{0}$
is in turn minimum for the energy $\omega^{k}$ over $\Pi(S^{j})$, i.e.
$\omega^{k}(\pi_{0})<\omega^{k}(S^{j})$. This implies, by scale
increasingness, that $\omega^{j}(\pi_{0})<\omega^{j}(S_{j})$ (here Relation
(\ref{hier 1}) has been red from right to left). But this inequality
contradicts the fact that $S^{j}$ is a minimum cut for its own hierarchy
$H(S^{j})$. Therefore the inclusion $S^{k}\subset S^{j}$ is rejected, and the
alternative inclusion $S^{j}$ $\subseteq$ $S^{k}$ is satisfied whatever $x\in
E$, which results in $\pi^{j\ast}\leq\pi^{k\ast}$. Moreover, because of the
single minimum cutting axiom, each $\pi^{j\ast}$ being unique, so does the
whole hierarchy $H^{\ast}$.

The "only if" statement will be proved by means of a counter-example. The
notation is the same as in system (\ref{Q}) but we add a term for the areas
and we replace the length $\partial T_{u}$ of the frontier of each $T_{u}$ by
the length $\partial_{hor}T_{u}$ (resp. $\partial_{vert}T_{u}$) of the
horizontal (resp. vertical) projection of the said frontiers if $j\leq j_{0},
$ (resp. if\ $j>j_{0}$).
\begin{align}
\omega^{j}(\pi(S))  &  =\omega(T_{1}\sqcup..T_{u}..\sqcup T_{q})=4\sum
\limits_{1\leq u\leq q}(areaT_{u})^{2}+2\sum\limits_{1\leq u\leq q}%
(\partial_{hor}T_{u}),\text{\ \ when \ \ \ }j\leq j_{0}\label{QQ}\\
\omega^{j}(\pi(S))  &  =\omega(T_{1}\sqcup..T_{u}..\sqcup T_{q})=4\sum
\limits_{1\leq u\leq q}(areaT_{u})^{2}+2\sum\limits_{1\leq u\leq q}%
(\partial_{vert}T_{u}),\text{ \ when \ \ \ }j>j_{0},\nonumber
\end{align}
(border lengths being divided by 2). For each value of $j$, the energy
$\omega_{1}^{j}$ is $h$-increasing, and one can apply proposition
\ref{unicity} for ensuring unicity. However, if $j\leq j_{0}$ then
$\omega(T_{1}\sqcup T_{2})=6,\omega(T_{1}^{\prime}\sqcup T_{2}^{\prime
})=4,\omega(S)=$\ $\omega(S^{\prime})=6,\omega(E)=19$.\ Therefore the minimum
cut is the partition $T_{1}^{\prime}\sqcup T_{2}^{\prime}\sqcup S$, depicted
in Figure \ref{counter_ex1} d). By symmetry, if $j>j_{0}$ then\ the minimum
cut becomes $T_{1}\sqcup T_{2}\sqcup S^{\prime}$, of Figure \ref{counter_ex1}
c). Now these two cuts are not comparable, which contradicts implication
(\ref{hier 6}).
\end{proof}

\bigskip

Relation (\ref{hier 6}) has been established by L.\ Guigues in his Phd thesis
\cite{GUI03} for affine and separable energies, called by him climbing
energies. However, the core of the assumption (\ref{hier 1}) concerns the
propagation of energy through the scales $(1...p)$, rather than affinity or
linearity, and allows non additive laws (see Section \ref{supremum energies}).
In addition, the new climbing axioms \ref{multiscale} lead to the algorithms
of the next section, much simpler than that of \cite{GUI03}.

\bigskip

The scale sequence has been supposed finite, i.e. with $1\leq j\leq p$. We
could replace $j$ by a positive parameter $\lambda\in$ $\mathbb{R}_{+}$. But
the induction method for finding the optimal cut requires a finite hierarchy
$H$. Therefore the number of the scale parameters actually involved in the
processing of $H$ will always be finite.

\section{Implementation via a pedagogical example}%

\begin{figure}
[ptb]
\begin{center}
\ifcase\msipdfoutput
\includegraphics[
height=3.7421in,
width=5.2477in
]%
{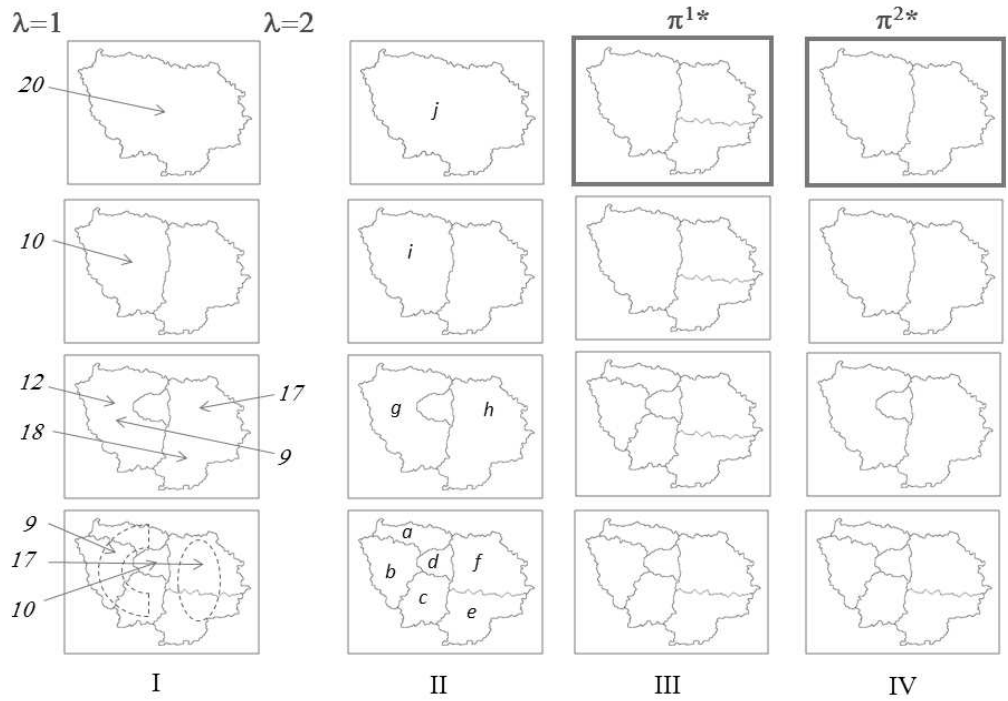}%
\else
\includegraphics[
height=3.7421in,
width=5.2477in
]%
{C:/SWtexts/2012/climbing_on_pyramids_2012/graphics/example__6.pdf}%
\fi
\caption{Column I: initial hierarchy and associated energies (left, for
$\lambda=1$; right, the changes for $\lambda=2$); column II: reading order;
columns III and IV: progressive extraction of the minimum cuts for $\lambda=1$
and $\lambda=2$, the two final minuma have bold frames. }%
\label{example}%
\end{center}
\end{figure}

We now describe step by step two algorithms for generating a hierarchy of
minimum cuts. The input hierarchy $H$ comprises the four levels and two
energies $\{\omega_{1},\omega_{2}\}$ depicted in Figure \ref{example}. The
energies of the classes are given along column I, left for $\lambda=1$, and
right for the energies that are different for $\lambda=2.$They are composed by
supremum (the energies of the three partial partitions $\{a,b,c\}$, and
$\{e,f\}$ are not decomposed). If class $S$ is temporary minimum for
$\omega_{\lambda}$ then is remains temporary minimum for all $\omega_{\mu},$
$\mu\geq\lambda$. The \textit{scale of apparition }$\lambda^{+}(S)=\inf
\{\lambda\mid S$ temporary minimum for $\omega_{\lambda}\}$ is the smallest
$\lambda$ for which $S$ is a temporary minimum. A class $S$ which is covered
by a temporary minimum class $Y$ at scale $\mu$ remains covered by temporary
minima at all scales $\nu\geq\mu$. The \textit{scale of removal} $\lambda
^{-}(S)=\min\{\lambda^{+}(Y),Y\in H,S\subseteq Y\}$ is the smallest $\mu$ for
which $S$ is covered. Therefore, if $\lambda^{+}(S)<\lambda^{-}(S)$, then
class $S\in H$ belongs to the minimum cuts $\pi^{\lambda\ast}$ for all
$\lambda$ of the \textit{interval of persistence} $[\lambda^{+}(S),\lambda
^{-}(S)]$. If $\lambda^{-}(S)\leq\lambda^{+}(S)$, then $S$, non-persistent,
never appears as a class of a minimum cut. This happens to class $g$, for
which $\lambda^{-}(g)=1$ and $\lambda^{+}(g)=2$.

When the two bounds $\lambda^{+}(S)$ and $\lambda^{-}(S)$ are known for all
classes $S\in H$ then the hierarchy of the minimum cuts $\pi^{\lambda\ast}$ is
completely determined.They are calculated in two passes as follows

\qquad1-take the nodes following their labels (ascending pass). For node $S$,
calculate the two energies $\omega_{\lambda}$ of $S$ and of its sons $\pi(S)$,
for $\lambda=1,2$. Stop at the first $\lambda$ such that $\omega_{\lambda
}(S)\leq\omega_{\lambda}(\pi(S))$. This value is nothing but $\lambda^{+}(S)$.
Continue for all nodes until he top of the hierarchy. This pass provides the
$\lambda^{+}(S)$ values for all $S\in H$. In Figure \ref{example} the sequence
of the two energies is climbing: when a class is temporary minimum for
$\lambda=1$, it also does for $\lambda=2$. Two changes occur from $\lambda=1$
to $\lambda=2$ : $g$ and $h$ have now the same energies as $\{a,b,c\}$ and
$\{e,f\}$respectively.

\qquad2-The second pass progresses top down. Each class $Y$ is compared to its
sons $Z_{1},.Z_{i}$, and one allocates to each son $Z_{i}$ the new value
$\min\{\lambda^{-}(Y),\lambda^{-}(Z_{i})\}$. At the end of the scan, all
values $\lambda^{-}(S)$ are known.

\section{Additive energies\label{additive energies}}

The additive mode was introduced and studied by L.Guigues under the name of
\textit{separable energies }\cite{GUI03}. All classes $S$ of $\mathcal{S}$ are
supposed to be connected. Denote by $\{T_{u},1\leq u\leq q\}$ the $q$ sons
which partition the node $S$, i.e. $\pi(S)=T_{1}\sqcup..T_{u}..\sqcup T_{q}$.
Provide the simply connected sets of $\mathcal{P}(E)$ with energy $\omega$,
and extend it from $\mathcal{P}(E)$ to the set $\mathcal{D}(E)$ of all partial
partitions by using the sums%
\begin{equation}
\omega(S)=\omega(T_{1}\sqcup..T_{u}..\sqcup T_{q})=\sum\limits_{1}^{q}%
\omega(T_{u}). \label{hier 10}%
\end{equation}
All separable energies $\omega$ are clearly $h$-increasing on any hierarchy,
since one can decompose the second member of implication
(\ref{croiss_hierarch}) into $\omega(\pi_{1}\sqcup\pi_{0})=\ \omega(\pi
_{1})+\omega(\pi_{0})$, and $\ \omega(\pi_{2}\sqcup\pi_{0})=\ \omega(\pi
_{2})+\omega(\pi_{0})$. However, they do not always lend themselves to
multiscale structures, and a supplementary assumption of affinity has to be
added \cite{GUI03}, by putting
\begin{equation}
\omega^{j}(S)=\omega_{\mu}(S)+\lambda^{j}\omega_{\partial}(S)\text{
\ \ \ \ \ \ \ }S\in\mathcal{S} \label{affine_energy}%
\end{equation}
where $\omega_{\mu}$ is a goodness-of-fit term, and $\omega_{\partial}$ a
regularization one, and $\lambda^{j}\geq0$ an increasing function of $j$. The
term $\omega_{\mu}$ is associated with the interior of $A$ and $\omega
_{\partial}$ with its boundary. In such an affine energy, the function
$\omega_{\partial}$ must be $c$-additive on the boundary arcs $F$, in the
sense of integral geometry, i.e. one must have
\begin{equation}
\omega_{\partial}(F_{1}\cup F_{2})+\omega_{\partial}(F_{1}\cap F_{2}%
)=\omega_{\partial}(F_{1})+\omega_{\partial}(F_{2}) \label{hier 11}%
\end{equation}
since most of the arcs $F_{1},F_{2},...F_{i}$ of the boundaries are shared
between two adjacent classes. The passage "s\textit{et}$\rightarrow
$\textit{partial partition}" can then be obtain by the summation
(\ref{hier 10}). One classically take for $\omega_{\partial}$ the arc length
function (e.g. in Rel.(\ref{hier 8}) ), but it is not the only choice. Below,
one of the examples by Salemenbier and Garrido about thumbnails uses
$\omega_{\partial}(S)=1$. One can also think about another $\omega_{\partial
}(S)$, which reflects the convexity of $A$.

\bigskip

The axiom (\ref{hier 1}) of scale increasingness involves only increments of
the energy $\omega$, which suggests the following obvious consequence:

\begin{proposition}
\label{scale_add}If a family $\{\omega^{j},1\leq j\leq p\}$ of energies is
scale increasing, then any family $\{\omega^{j}+\omega_{0},1\leq j\leq p\}$,
where $\omega_{0}$ is an arbitrary energy over $\widetilde{\Pi}$ which does
not depends on $j$, is in turn scale increasing.
\end{proposition}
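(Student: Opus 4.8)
The plan is to verify axiom (\ref{hier 1}) directly for the shifted family $\eta^{j}:=\omega^{j}+\omega_{0}$. First I would fix $j\leq k$, a node $S\in\mathcal{S}$ and a cut $\pi\in\Pi(S)$, and assume $\eta^{j}(S)\leq\eta^{j}(\pi)$, the goal being $\eta^{k}(S)\leq\eta^{k}(\pi)$. Substituting $\eta=\omega+\omega_{0}$ and rearranging, the hypothesis reads $\omega^{j}(\pi)-\omega^{j}(S)\geq\omega_{0}(S)-\omega_{0}(\pi)$ and the desired conclusion reads $\omega^{k}(\pi)-\omega^{k}(S)\geq\omega_{0}(S)-\omega_{0}(\pi)$. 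The whole point is that the right-hand side, call it $c=\omega_{0}(S)-\omega_{0}(\pi)$, is the same at scales $j$ and $k$, because $\omega_{0}$ does not depend on the scale.

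Accordingly I would recast scale increasingness as a statement about the energy increment $\delta^{j}(S,\pi):=\omega^{j}(\pi)-\omega^{j}(S)$, which is exactly the reading the text stresses (``involves only increments'', ``preserve the sense of energetic differences''). Read this way, axiom (\ref{hier 1}) says that for each fixed pair $(S,\pi)$ the increment $\delta^{j}(S,\pi)$ does not decrease as $j$ grows. Adding $\omega_{0}$ replaces $\delta^{j}$ by $\delta^{j}-c$ with $c$ independent of $j$; a uniform shift preserves the non-decreasing character, so $\delta^{j}-c\geq 0$ still propagates from $j$ to every $k\geq j$. This is precisely axiom (\ref{hier 1}) for $\eta^{j}$, as wanted.

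The step that needs care, and the main obstacle, is the justification that (\ref{hier 1}) genuinely controls the increment monotonically rather than only its sign at the single threshold $0$. In the separable and affine setting of this section this is transparent: with the affine form (\ref{affine_energy}), $\omega^{j}(S)=\omega_{\mu}(S)+\lambda^{j}\omega_{\partial}(S)$ and $\lambda^{j}$ increasing in $j$, the increment $\delta^{j}(S,\pi)=[\omega_{\mu}(\pi)-\omega_{\mu}(S)]+\lambda^{j}[\omega_{\partial}(\pi)-\omega_{\partial}(S)]$ is affine in $\lambda^{j}$ with a coefficient of fixed sign, hence monotone in $j$. Crucially, adding the scale-independent $\omega_{0}$ only turns the fit term $\omega_{\mu}$ into $\omega_{\mu}+\omega_{0}$ and leaves the regularization term $\omega_{\partial}$, the coefficient of $\lambda^{j}$, untouched; so the sign of $\omega_{\partial}(\pi)-\omega_{\partial}(S)$, on which scale increasingness rests, is unchanged and the conclusion follows. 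I would present the argument in this increment form, which keeps the proof as short and ``obvious'' as the statement advertises.
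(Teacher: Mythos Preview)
The paper offers no formal proof here; the proposition is presented as an ``obvious consequence'' of the remark that axiom (\ref{hier 1}) ``involves only increments of the energy $\omega$''. Your increment formulation is the natural translation of that hint. The gap you yourself flag, however, is fatal rather than merely delicate: axiom (\ref{hier 1}) asserts only that the \emph{sign} of $\delta^{j}(S,\pi)$ propagates (i.e.\ $\delta^{j}\geq 0\Rightarrow\delta^{k}\geq 0$ for $k\geq j$), not that $j\mapsto\delta^{j}$ is non-decreasing. Your sentence ``axiom (\ref{hier 1}) says that \ldots\ the increment $\delta^{j}(S,\pi)$ does not decrease as $j$ grows'' is therefore an over-reading, and the uniform-shift step rests entirely on it. Retreating to the affine form (\ref{affine_energy}) establishes the conclusion only for that special family, not for an arbitrary scale-increasing $\{\omega^{j}\}$ as the proposition is stated.

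In fact the statement is false at that level of generality. On a two-level hierarchy with a single node $S$ and a single nontrivial cut $\pi$, set $\omega^{1}(S)=\omega^{2}(S)=0$, $\omega^{1}(\pi)=1$, $\omega^{2}(\pi)=\tfrac{1}{2}$; then $\delta^{1}=1$ and $\delta^{2}=\tfrac{1}{2}$ are both nonnegative, so $\{\omega^{1},\omega^{2}\}$ satisfies (\ref{hier 1}). With $\omega_{0}(S)=\tfrac{3}{4}$ and $\omega_{0}(\pi)=0$ one obtains $\eta^{1}(S)=\tfrac{3}{4}\leq 1=\eta^{1}(\pi)$ but $\eta^{2}(S)=\tfrac{3}{4}>\tfrac{1}{2}=\eta^{2}(\pi)$, so $\{\eta^{j}\}$ violates (\ref{hier 1}). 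Hence neither the paper's one-line hint nor your argument can close the gap without an added hypothesis --- precisely the monotonicity of $j\mapsto\delta^{j}(S,\pi)$ that the affine model of this section happens to supply.
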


\subsection{Additive energy and convexity}

Consider, indeed, in $\mathbb{R}^{2}$ a compact connected set $X$ without
holes, and let $d\alpha$ be the elementary rotation of its outward normal
along the element $du$ of the frontier $\partial X$. As the radius of
curvature $R$ equals $du/d\alpha$, and as the total rotation of the normal
around $\partial X$ equals $\pi$, we have%
\begin{equation}
2\pi=\int\limits_{R\geq0}\frac{du}{R(u)}+\int\limits_{R>0}\frac{du}{\left\vert
R(u)\right\vert }. \label{hier 12}%
\end{equation}
When dealing with partitions, the distinction between outward and inward
vanishes, but the parameter%
\[
n(X)=\frac{1}{2\pi}\int\limits_{\partial X}\frac{du}{\left\vert
R(u)\right\vert }%
\]
still makes sense. It reaches its minimum $1$ when set $X$ is convex, and
increases with the degree of concavity. For a starfish with 5 pesudo-podes, it
values around 5. Now $n(X)$ is $c$-additive for the open parts of contours,
therefore it can participate as a supplementary term in an additive
energy.\ In digital implementation, the angles between contour arcs must be
treated separately (since $c$-additivity applies on the open parts).

\subsection{ Mumford and Shah energy}

Let $\pi(S)$ be the partition of a summit $S$ into its $q$ sons $\{T_{u},1\leq
u\leq q\}$ i.e. $\pi(S)=T_{1}\sqcup..T_{u}..\sqcup T_{q}$. The classical
Mumford and Shah energy $\omega^{j}$ on $\pi(S)$ and w.r.t. a function $f$
comprises two terms \cite{MUM88}. The first one sums up the quadratic
differences between $f$ and its average $m(T_{u})$ in the various $T_{u}$, and
a second term weights by $\lambda^{j}$ the lengths $\partial T^{i}$ of the
frontiers of all $T_{u}$, i.e.
\begin{equation}
\omega^{j}(\pi(S))=\sum\limits_{1\leq u\leq q}\int_{x\in T^{u}}\parallel
f(x)-m(T_{u})\parallel^{2}+\lambda^{j}\sum\limits_{1\leq u\leq q}(\partial
T_{u})=\omega_{\mu}(\pi)+\lambda^{j}\omega_{\partial}(\pi) \label{hier 8}%
\end{equation}
where the weight $\lambda^{j}$ is a numerical increasing function of the level
number\ $j$. Both increasingness relations (\ref{croiss_hierarch}) and
(\ref{hier 1}) are satisfied and Theorem \ref{pyram_croit} applies to the
additive energy (\ref{hier 8}).

\subsubsection{Additive energies and Lagrangian}

The example of additive energy that we now develop is an extension of the
creation of thumbnails by Ph. Salembier and L. Garrido \cite{Sal20}
\cite{SAL11}, itself based on Equation (\ref{hier 8}). We aim to generate "the
best" simplified version of the image $f$ of Fig. \ref{talbot}, in its color
version of components ($r;g;b$), for the compression rate $\rho\simeq25$. The
bit depth of $f$ is $24$ and its size is $320\times416$ pixels. The hierarchy
$H$ of \ $f$ is that depicted, via its saliency map, in Fig. \ref{talbot}c. It
has been obtained from the luminance $l=(r+g+b)/3$. In each class $S$ of $H$,
the reduction consists in replacing the function $f$ by its mean $m(l)$ in
$S$. The quality of this approximation is estimated by the $L_{2}$ norm, i.e.%
\begin{equation}
\omega_{\mu}(S)=\sum\limits_{x\in S}\parallel l(x)-m(S)\parallel^{2}
\label{hier 16}%
\end{equation}

If the coding cost for a frontier element is $c$, that of the whole class $S$
becomes%
\begin{equation}
\omega_{\partial}(S)=24+\frac{c}{2}\mid\partial S\mid\label{hier 15}%
\end{equation}
with $24$ bits for $m(S)$, and we take $c=2$ for elementary cost. The total
energy of a cut $\pi$ is thus written $\omega^{j}(\pi)=$ $\omega_{\mu}%
(\pi)+\lambda^{j}\omega_{\partial}(\pi)$. By applying Lagrange's theorem, we
observe that the problem of finding the minimum of $\omega_{\mu}(\pi)$ under
the constraint $\omega_{\partial}(\pi)\simeq25$ implies that the Lagrangian
$\omega_{\mu}(\pi)+\lambda^{j}\omega_{\partial}(\pi)$\ is a minimum. In this
equation, the level $j$ is unknown, but as the $\omega^{j}$ are multiscale, we
can easily calculate the sequence $\{\pi^{j\ast}$, $1\leq j\leq p\}$ of the
minimum cuts. Moreover, the term $\omega_{\partial}(\pi)$ itself turns out to
be a decreasing function of $j $ and $\lambda^{j}$, so that the solution of
the Lagrangian is the $\pi^{j\ast}$ whose term $\omega_{\partial}(\pi^{j\ast
})$ is the greatest one smaller than $k$. It is depicted in
Fig.\ref{talbot_cuts} a (in a black and white version).%

\begin{figure}
[ptb]
\begin{center}
\ifcase\msipdfoutput
\includegraphics[
height=2.3237in,
width=5.1871in
]%
{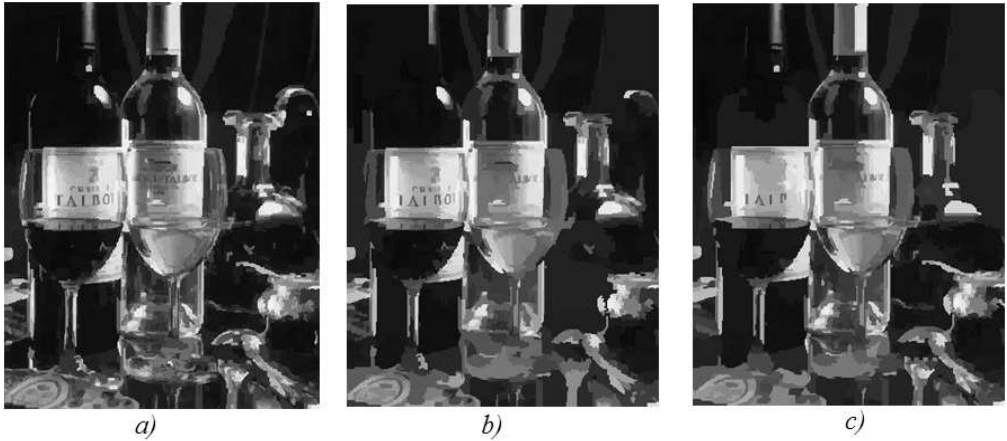}%
\else
\includegraphics[
height=2.3237in,
width=5.1871in
]%
{C:/SWtexts/2012/climbing_on_pyramids_2012/graphics/Talbot_cuts__7.pdf}%
\fi
\caption{ Minimum cuts of the hierarchy Fig. \ref{talbot}c, for two
compression rates of 25 \ and 55 applied to the luminance ( a) and b)
respectively); c) for a compression rate of 55\ applied to the chrominance. }%
\label{talbot_cuts}%
\end{center}
\end{figure}

\bigskip

Classically one reaches the Lagrangian\ minimum value by means of a system of
partial derivatives. Now, remarkably, the present approach replaces the of
computation of derivatives by a climbing. Moreover, we have under hand, at
once, all best cuts for all compression rates. If we take $\rho\simeq55$ for
example, we find the image of Fig.\ref{talbot_cuts} b, whose partition is
located at a higher level in the same pyramid of the best cuts. L.\ Guigues
was the first to point out\ this nice property \cite{GUI03}.

\bigskip

There is no particular reason to choose the same luminance $l$ for generating
the pyramid and, later, as the quantity to involve in the quality
(\ref{hier 16}) to minimize. In the $RGB$ space, a colour vector
$\overrightarrow{x}(r;g;b)$ can be decomposed in two orthogonal projections

\textit{i}) on the grey axis, namely$\overrightarrow{l}$of components
$(l/3;l/3;l/3)$,

\textit{ii}) and on the chromatic plane orthogonal to the grey axis at the
origin, namely $\overrightarrow{c}$ of components$(3/\sqrt{2}%
)(2r-g-b;2g--b-r;2b-r-g)$.

We have $\overrightarrow{x}=\overrightarrow{l}+$ $\overrightarrow{c}$. The
optimization is obtained by replacing the luminance $l(x)$ in (\ref{hier 16})
by the module $\mid\overrightarrow{c}(x)\mid$ of the chrominance at point $x$.
We now find for best cut the segmentation depicted in Fig.\ref{talbot_cuts} c,
where, for the same compression rate $\rho\simeq55$, color details are better
rendered (e.g. right bottom), but black and white parts are worse (e.g. the
letters on the labels).

\section{Energies composed by supremum\label{supremum energies}}

We now us go back to Rel.(\ref{hier 10}), and replace the sum by a supremum.
It gives:%
\[
\omega(S)=(T_{1}\sqcup..T_{u}..\sqcup T_{q})=\bigvee\limits_{1}^{q}%
\omega(T_{u}).
\]
This time the second member of implication (\ref{croiss_hierarch}) is written
$\omega\lbrack(\pi_{1}\sqcup\pi_{0}]=\ \omega(\pi_{1})\vee\omega(\pi_{0})$,
and $\ \omega\lbrack(\pi_{2}\sqcup\pi_{0}]=\ \omega(\pi_{2})\vee\omega(\pi
_{0})$, so that $\omega$ is $h$-increasing. The property remain true when
$\vee$ is replaced by $\wedge$.

\subsection{Connective segmentation under constraint \label{soille}}%

\begin{figure}
[ptb]
\begin{center}
\ifcase\msipdfoutput
\includegraphics[
height=2.4465in,
width=3.8761in
]%
{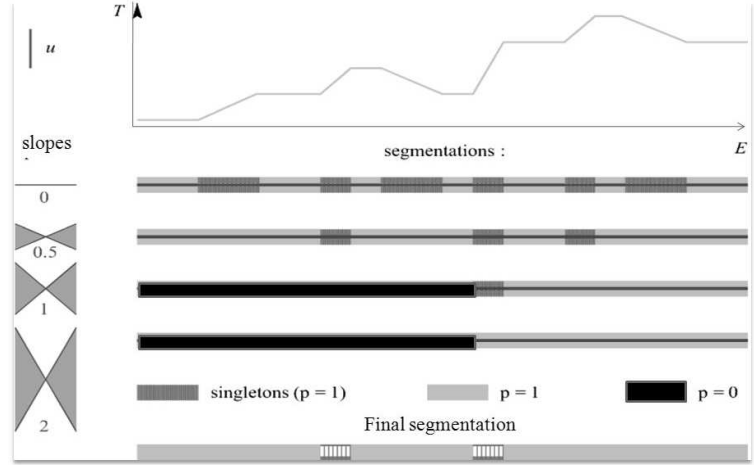}%
\else
\includegraphics[
height=2.4465in,
width=3.8761in
]%
{C:/SWtexts/2012/climbing_on_pyramids_2012/graphics/ronse__8.pdf}%
\fi
\caption{\ Function $f$ is segmented by smooth path connections of increasing
slopes. Then one takes at each point the largest class $S$ with $\Delta
_{f}(S)\leq c$ (from C.\ Ronse \cite{RON09b}).}%
\label{ronse}%
\end{center}
\end{figure}

This method was proposed by P. Soille and J. Grazzini in \cite{SOI08a} and
\cite{SOI09a} with several variants; it is re-formulated by C.\ Ronse in a
more general framework in \cite{RON09b}.\ Start from a hierarchy $H$ and a
numerical function $f$. Define the energy $\omega^{j}$ for class $S$ by
\begin{align*}
\omega^{j}(S)  &  =0\text{ \ \ \ when \ }\sup\{f(x),x\in S\}-\inf\{f(x),x\in
S\}\leq c^{j}\\
\omega^{j}(S)  &  =1\text{ \ \ \ when not,}%
\end{align*}
where $c^{j}$ is a given bound, and extend to partitions by $\vee
$-composition. The class at point $x$\ of the largest partition of minimum
energy is given by the largest $S\in\mathcal{S}$, that contains $x$, and such
that the amplitude of variation of $f$ inside $S$ be $\leq c^{j}$. When the
energy $\omega^{j}\ $of a father equals that of its sons, one keeps the father
when $\omega^{j}=0$, and the sons when not. As bound $c^{j}$ increases, the
$\{\omega^{j}\}$ form a climbing energy, previously referred to in corollary 
\cite{unicity2} and depicted by the pedagogical example of Figure \ref{ronse} .

\subsection{Lasso}

This algorithm, due to F. Zanoguera et Al. \cite{ZAN99}, appears also in
\cite{MEY09a}. An initial image has been segmented into $\alpha$-flat zones,
which generates a hierarchy as the slope $\alpha$ increases . The optimization
consists then in drawing manually a closed curve around the object to segment.
If $A$ designates the inside of this lasso, then we take the following
function
\begin{align}
\omega(S)  &  =0\text{ \ \ \ when \ }S\subseteq A\text{ ;\ \ }S\in
\mathcal{S}\label{hier 9}\\
\omega(S)  &  =1\text{ \ \ \ when not,}\nonumber
\end{align}
for energy, and we go from classes to partitions by $\vee$-composition of the
energies. The largest cut that minimizes $\omega$ is depicted in Figure
\ref{girasol}c. We see that the resulting contour follows the edges of the
petals. Indeed, a segmented class can jump over this high gradient only for
$\alpha$ large enough, and then this class is rejected because it spreads out
beyond the limit of the lasso.%

\begin{figure}
[ptb]
\begin{center}
\ifcase\msipdfoutput
\includegraphics[
height=1.644in,
width=3.2794in
]%
{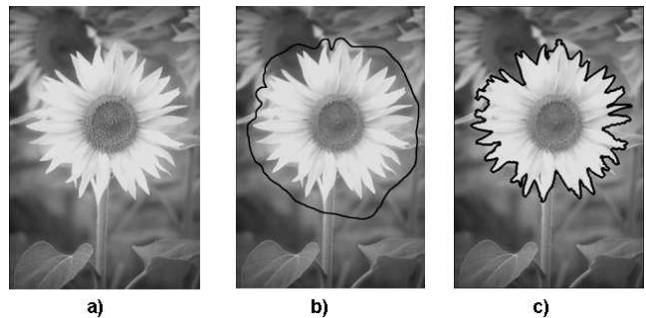}%
\else
\includegraphics[
height=1.644in,
width=3.2794in
]%
{C:/SWtexts/2012/climbing_on_pyramids_2012/graphics/girasol__9.pdf}%
\fi
\caption{a) Initial image; b) manual lasso; c) contour of the union of the
classes inside the lasso.}%
\label{girasol}%
\end{center}
\end{figure}

By taking a series $\{A^{j},1\leq j\leq p\}$ of increasing lassos $A^{j}$ we
make climbing the energy $\omega$ of Relation (\ref{hier 9}), and Theorem
\ref{pyram_croit} applies. Unlike the previous case, where the $\lambda^{j}$
are scalars numbers, the multiscale parametrization is now given by a family
of increasing sets.

\section{Conclusion}

Other examples can be given, concerning colour imagery in particular
\cite{ANG06a}. At this stage, the main goal is to extend the above approach to
vector data, and more generally to GIS type data.

\end{document}